\documentclass{amsproc}
\usepackage{amsmath}
\usepackage{amssymb}
\input xy
\xyoption{all}

\theoremstyle{definition}
\newtheorem{theorem}{Theorem}[section]

\theoremstyle{definition}

\newtheorem{example}[theorem]{Example}
\newtheorem{Lemma}[theorem]{Lemma}

\newtheorem{Prop}[theorem]{Proposition}
\theoremstyle{remark}
\newtheorem{remark}[theorem]{Remark}

\numberwithin{equation}{section}
\newcommand{\Cal}[1]{{\mathcal #1}}

\newcommand{\End}{\operatorname{End}}

\newcommand{\Der}{\operatorname{Der}}

\newcommand{\Z}{\mathbb{Z}}
\newcommand{\Q}{\mathbb{Q}}
\newcommand{\R}{\mathbb{R}}
\newcommand{\cmat}{\left(\begin{array}}
\newcommand{\fmat}{\end{array}\right)}

\newcommand{\N}{\mathbb{N}}
\newcommand{\NX}{\mathbb{N}_0[x]^*}
\newcommand{\midN}{\mid_{\N}}
\newcommand{\midZ}{\mid_{\Z}}
\newcommand{\nmidN}{\nmid_{\N}}

\newcommand{\Spec}{\operatorname{Spec}}

\begin{document}

\title[]{Factorizations of polynomials with integral non-negative coefficients}
  
 \author{Federico Campanini}
\address{Dipartimento di Matematica ``Tullio Levi-Civita'', Universit\`a di Padova,\linebreak 35121 Padova, Italy}
\email{federico.campanini@math.unipd.it, facchini@math.unipd.it}
 
\author{Alberto Facchini}
\thanks{Partially supported by Dipartimento di Matematica ``Tullio Levi-Civita'' of Universit\`a di Padova (Project BIRD163492/16 ``Categorical homological methods in the study of algebraic structures'' and Research program DOR1690814 ``Anelli e categorie di moduli'').}

\subjclass[2010]{Primary 20M14, Secondary 12D05.}


\begin{abstract} We study the structure of the commutative multiplicative monoid $\N_0[x]^*$ of all the non-zero polynomials in $\Z[x]$ with non-negative coefficients. We show that $\N_0[x]^*$ is not a half-factorial monoid and is not a Krull monoid, but has a structure very similar to that of Krull monoids, replacing valuations into $\N_0$ with derivations into $\N_0$. We study ideals, chain of ideals, prime ideals and prime elements of $\N_0[x]^*$. Our monoid $\N_0[x]^*$ is a submonoid of the multiplicative monoid of the ring $\Z[x]$, which is a left module over the Weyl algebra $A_1(\Z)$. 
\end{abstract}

\maketitle

\section{Introduction}

Let $\N_0$ be the set of all non-negative integers $0,1,2,\dots$, let $\N_0[x]$ be the set of all polynomials in the indeterminate $x$ with coefficients in $\N_0$, and $\N_0[x]^*:=\N_0[x]\setminus\{0\}$ be the set of all  non-zero elements of $\N_0[x]$. 

It should not be necessary to present motivations for the study of factorizations of polynomials with integral non-negative coefficients, that is, factorizations in the commutative monoid (semiring) $\N_0[x]$. Here is only one of the infinitely many applications, due to Hashimoto and Nakayama \cite{H, HN}, who showed that the Krull-Schmidt theorem does not hold for finite partially ordered sets. Hashimoto and Nakayama noticed that there are two essentially different factorizations $(x^3+1)(x^2 + x + 1)=(x + 1)(x^4+x^2+1)$ of $x^5+x^4+x^3+x^2+x+1$ into polynomials irreducible in $\N_0[x]$ (atoms of the commutative monoid $\N_0[x]^*$). Now the category of partially ordered sets has coproducts (disjoint unions) and products (direct products with the component-wise order). Let $L=\{0,1\}$ be the partially ordered set with two elements $0<1$.  For every $n\ge 0$, the direct product $L^n$ is a connected partially ordered set with $2^n$ elements and its automorphism group is the symmetric group~$S_n$. Thus we get  two essentially different direct-product decompositions of the partially ordered set $1\dot{\cup}L\dot{\cup}L^2\dot{\cup}L^3\dot{\cup}L^4\dot{\cup}L^5$ into indecomposable partially ordered sets, given by
$(L^3\dot{\cup}1)\times(L^2 \dot{\cup} L \dot{\cup} 1)\cong(L \dot{\cup} 1)\times(L^4\dot{\cup}L^2\dot{\cup}1)$. This technique can be clearly applied in studying the Krull-Schmidt property in any distributive category. 

With these motivations in mind, we study factorizations in the cancellative multiplicative monoid $\NX$, which is known not to be half-factorial (see Section~\ref{Sec2}). Thus we look for further ``regularity properties" of factorizations in $\NX$. Factorizations turn out to be rather regular in Krull monoids. We show that $\NX$ is not a Krull monoid because, for instance, it is not integrally closed (root closed). Nevertheless, $\NX$ has a structure pretty similar to that of Krull monoids. For instance, $\NX$ is a cancellative atomic monoid whose quotient group is a free abelian group (Proposition~\ref{2.2}). Moreover, if instead of monoid morphisms we consider derivations, we get a description of $\NX$ which is very similar to that of Krull monoids (Propositions~\ref{m} and~\ref{m'}). 

In the study of Krull monoids and Krull domains, a predominant role is played by prime ideals of height $1$. This has led us to consider: (1)~Ideals of $\NX$, in particular, chains of principal ideals, which correspond to factorizations in $\NX$ (Sections \ref{id} and~\ref{cpi}). (2)~Prime ideals and prime elements of $\NX$.
(3)~Monoid morphisms of $\NX$ into the additive monoid $\N_0$ (valuations of $\NX$), because for any valuation $v\colon M\to\N_0$ of a commutative monoid $M$, the set of all elements $a\in M$ with $v(a)>0$ is a prime ideal of $M$. 

Since $\NX$ has a description very similar to that of Krull monoids when derivations replace monoid valuations, we consider the Weyl algebra $A_1(\Z)$, over which the ring of polynomials $\Z[x]$ turns out to be a left module  (Section~\ref{n}). The Weyl algebra $A_1$ provides a standard method to study a derivation \cite{FacNazeErice}.

We conclude the paper presenting a further technique to describe factorizations of polynomials with integral non-negative coefficients. Clearly, the set $\NX$ is countable, so that its elements can be parametrized in $\N_0$. We present here a paramentrization of the elements of $\NX$ indexed by the set $\N_0^2$. Our parametrization gives rise to an algorithm that allows to determine factorizations in $\NX$.

\section{Divisiblility in $\N_0[x]^*$}\label{Sec2}

Let $M$ be a commutative cancellative multiplicative monoid and $U(M)$ be the group of its units (=\, invertible elements). If $U(M)=\{1\}$, we will say that $M$ is {\em reduced}. Recall that an element $a \in M$ is an {\em atom} if $a \notin U(M)$ and, whenever $a=xy$ in $M$, either $x \in U(M)$ or $y \in U(M)$. We denote by $\Cal A (M)$ the set of all atoms of $M$.
The monoid $M$ is {\em atomic} if every element of $M\setminus U(M)$ can be written as a product of finitely many atoms, that is, $M$ is generated by the set $\Cal A(M) \cup U(M)$.  An element $a\in M$ is {\em prime}  if it is an atom and, for every $b,c\in M$, $a|bc$ in $M$ implies that either $a|b$ or $a|c$.

In this paper, we assume that {\em our monoids are all commutative, cancellative and atomic}.

 A {\em translation-invariant pre-order}
on a commutative multiplicative monoid $M$ is any relation $\le$ on the set $M$ that is reflexive, transitive and such that $x\le y$ implies $xz\le yz$ for every $x,y,z\in M$.
There is a natural translation-invariant pre-order
on any commutative monoid $M$, called the \index{algebraic pre-order}{\em
algebraic pre-order\/} on $M$, defined, for all $x,y\in M$,  by $x\le y$ if
there exists $z\in M$ such that
$xz=y$. 

\begin{remark}
Notice that the only invertible element in $\NX$ is the identity, and that the algebraic pre-order on $\NX$ is not the restriction of the algebraic pre-order on $\Z[x]$, that is, there exist polynomials $f(x),g(x) \in \NX$ such that $g(x)$ divides $f(x)$ in $\Z[x]$ but not in $\NX$. This implies that, as far as divisibility in $\NX$ is concerned, we must be very careful, because we have two different notions of divisibility in $\NX$. Given $f,g\in \NX$, we write $f\midN g$ if there exists $h\in \NX$ with $g=fh$, and we write $f|_{\Z}g$ if there exists $h\in \Z[x]$ with $g=fh$. Both $\midN$ and $\midZ$ are partial orders on $\NX$ with least element 1 and with no maximal elements. Clearly, $f|_{\N}g$ implies $f|_{\Z}g$, but not conversely, as the factorization $x^3+1=(x+1)(x^2-x+1)$ shows.
\end{remark}

The multiplicative monoid $\N_0[x]^*$ is a cancellative monoid, hence it is contained in its quotient group $G(\N_0[x]^*)$.

\begin{Prop}\label{2.2} The quotient group $G(\N_0[x]^*)$ of the cancellative monoid $\N_0[x]^*$ is a free abelian group.\end{Prop}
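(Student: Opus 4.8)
The plan is to show that $G(\N_0[x]^*)$ is a free abelian group by exhibiting an explicit basis built out of irreducible polynomials over $\Z$ and a correction generator. First, I would recall that the quotient group $G(\N_0[x]^*)$ embeds naturally into the quotient group of the multiplicative monoid $\Z[x]\setminus\{0\}$, which is the group of units of the field of fractions $\Q(x)$ modulo nothing—or more usefully, into $\Q(x)^*$ itself. Since $\Z[x]$ is a UFD, the group $\Q(x)^*$ is the free abelian group generated by the classes of all irreducible polynomials in $\Z[x]$ (normalized, say, to have positive leading coefficient) together with the primes of $\Z$, modulo the relation that a polynomial factors into its content times its primitive part. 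Concretely, $\Q(x)^*\cong \Q^*\times F$, where $F$ is the free abelian group on monic irreducibles of $\Q[x]$; and $\Q^*$ is itself free abelian on the rational primes together with $\{-1\}$ of order two—so one must be slightly careful since $\Q^*$ has torsion. The key observation is that $\N_0[x]^*$ avoids this torsion: every element of $\N_0[x]^*$ has non-negative (in particular, the leading coefficient positive) coefficients, so $-1$ never enters, and the subgroup generated by $\N_0[x]^*$ inside $\Q(x)^*$ lands in the torsion-free part.

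The main steps, in order, are as follows. (1) Identify $G(\N_0[x]^*)$ with the subgroup of $\Q(x)^*$ generated by $\N_0[x]^*$. (2) Factor each $f\in\N_0[x]^*$ uniquely in $\Z[x]$ as $f=c\cdot g_1\cdots g_k$, where $c\in\N$ is the content and the $g_i$ are primitive irreducible polynomials in $\Z[x]$ with positive leading coefficient; note the leading coefficient condition is automatic and consistent because $f$ has a positive leading coefficient. (3) Write $c=\prod_p p^{e_p}$ as a product of rational primes with $e_p\ge 0$. This shows that the image of $\N_0[x]^*$ in $\Q(x)^*$ is contained in the free abelian group $F_0$ with basis $\{\,p : p \text{ a rational prime}\,\}\cup\{\,g : g \text{ primitive irreducible in }\Z[x]\text{ with positive leading coefficient}\,\}$. (4) Conversely, every such basis element lies in $\N_0[x]^*$: the rational primes are constant polynomials in $\N_0[x]^*$, and one must check each primitive irreducible $g$ with positive leading coefficient either already lies in $\N_0[x]^*$ or becomes a quotient of two elements of $\N_0[x]^*$—e.g. $x^2-x+1 = (x^3+1)/(x+1)$. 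So the generated subgroup is all of $F_0$. (5) Conclude $G(\N_0[x]^*)=F_0$ is free abelian. A subgroup of a free abelian group is free abelian, which also handles step (4) cleanly if one prefers: it suffices to note $G(\N_0[x]^*)$ is a subgroup of the free abelian group $F_0$, hence free.

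The main obstacle I anticipate is step (4)—or rather, deciding whether one needs it at all. The cleanest route is to observe that $G(\N_0[x]^*)$ injects into $\Q(x)^* / \{\pm 1\}$, or directly into the torsion-free group $F_0$ described above, because every element of $\N_0[x]^*$ has positive leading coefficient and positive content, so no unit $-1$ of $\Z$ and no sign ambiguity among irreducible factors ever intervenes; then freeness is immediate from the theorem that subgroups of free abelian groups are free, and no surjectivity argument is needed. The subtle point to get right is the bookkeeping that ensures the representation $f = c\cdot\prod g_i$ with all $g_i$ having positive leading coefficient is genuinely canonical and that the induced map to $F_0$ is a well-defined injective homomorphism on the quotient group; this amounts to unique factorization in $\Z[x]$ together with Gauss's lemma, applied with care to the sign conventions. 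Once that homomorphism is in hand, the proposition follows in one line.
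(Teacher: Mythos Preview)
Your approach is essentially the same as the paper's: embed $\N_0[x]^*$ into a free abelian group via unique factorization (the paper uses monic irreducibles of $\Q[x]$ together with $\Q^*\cong\{\pm1\}\times\Z^{(P)}$, you use primitive irreducibles of $\Z[x]$ with positive leading coefficient together with the primes---these are equivalent bases), observe that the sign $-1$ never appears because leading coefficients in $\N_0[x]^*$ are positive, and conclude by the subgroup theorem for free abelian groups. One caution: your optional step~(4) is actually false---for instance $x-1$ is a primitive irreducible with positive leading coefficient, yet $(x-1)g\notin\N_0[x]^*$ for every $g\in\N_0[x]^*$ (the constant term would be $\le 0$, forcing $g(0)=0$, and inductively $g=0$), so $x-1\notin G(\N_0[x]^*)$ and the generated subgroup is \emph{not} all of $F_0$---but you rightly note that surjectivity is unnecessary and the paper does not attempt it either.
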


\begin{proof} Consider the multiplicative cancellative monoid $\Q[x]^*:=\Q[x]\setminus\{0\}$. Since $\Q[x]$ is a unique factorization domain, we can write every non-zero polynomial in $\Q[x]$ in a unique way as a product of a non-zero rational number and monic irreducible polynomials in $\Q[x]$. Let $I$ be the set of all monic irreducible polynomials in $\Q[x]$. We thus have a monoid isomorphism  $\Q[x]^*\to\Q^*\times\N_0^{(I)}$, where $\N_0^{(I)}$ denotes the factorial monoid (=free commutative monoid) on the free set $I$ of generators. Passing to the field of fractions $\Q(x)$ of $\Q[x]$, we get a group isomorphism $\Q(x)^*\to\Q^*\times\Z^{(I)}$, where now $\Z^{(I)}$ denotes the free abelian group on the free set $I$ of generators. Similarly, every non-zero integer can be written in a unique way as a product of $\pm1$ and prime numbers, and we have a group isomorphism $\Q^*\to\{1,-1\}\times\Z^{(P)}$, where $P$ denotes the sets of prime integers. Hence there is a group isomorphism $\varphi\colon \Q(x)^*\to\{1,-1\}\times\Z^{(P)}\times\Z^{(I)}$, where the number $1$ or $-1$ in $\varphi(f/g)$ is determined by the sign of the quotients of the leading coefficients of $f$ and $g$. Hence $\varphi(\N_0[x]^*)\subseteq \{1\}\times\Z^{(P)}\times\Z^{(I)}$, and thus there is a monoid embedding $\psi\colon \N_0[x]^*\hookrightarrow \Z^{(P)}\times\Z^{(I)}$. Whence $G(\N_0[x]^*)$ is a subgroup of the free abelian group $\Z^{(P)}\times\Z^{(I)}$, and therefore it is a free abelian group.
\end{proof}

Let us briefly recall some basic notions concerning the theory of non-unique factorizations in monoids (see \cite[Chapter 1]{GHK06} for more details).
Let $M$ be a monoid. For every $x \in M \setminus U(M)$, the {\em set of lengths of $x$ in $M$} is defined as the set $L(x):=\{n \mid \mbox{ there are } a_1,\dots,a_n \in \Cal A(M) \mbox{ with }x=a_1 \cdots a_n\}$.
The {\em elasticity of $x$} is the rational number $\rho(x):=\sup L(x)/\min L(x)$.
A monoid $M$ is a BF-{\em monoid} (or a {\em monoid with bounded factorizations}) if $L(x)$ is finite for every $x \in M \setminus U(M)$. If $|L(x)|=1$ for every $x \in M \setminus U(M)$, the monoid is called {\em half-factorial}.

An important tool for investigating the structure of sets of lengths of a monoid $M$ is the {\em set of distances of $M$} (or the $\Delta$-set of $M$). It is the subset $\Delta(M)$ of $\N$ consisting of all $d \in \N$ for which there exist $x \in M \setminus U(M)$ and $\ell \in L(x)$ such that $L(x)\cap[\ell,\ell+d]=\{\ell,\ell+d\}$.
It is immediately seen that a monoid $M$ is half-factorial if and only if $\Delta(M)=\emptyset$.

Of course, the multiplicative monoid $\NX$ is a BF-monoid, but there are several examples demonstrating the non-half-factoriality of $\NX$ (see \cite{vdW12} and \cite{Bru13}). However, in \cite[Section~2]{CCMS09}, the authors show that $\NX$ is surprisingly very far from being half-factorial. In fact, given any rational number $q\geq 1$, there exists an element $f(x) \in \NX$ such that the elasticity $\rho(f)$ of $f(x)$ is equal to $q$ (monoids with this property are said to have {\em full infinite elasticity}) and moreover $\Delta(\NX)=\N$ (see Theorem 2.3 and the last sentence in Section~2 of \cite{CCMS09}).

It is noteworthy that another important result is contained in the proof of \cite[Theorem~2.3]{CCMS09} (and we are grateful to the referee for making us notice this): the set of catenary degrees of $\NX$ equals $\N_{\geq 2}$. Let us explain what it means (again, we refer to \cite[Chapter 1]{GHK06} for more details). Let $M$ be a reduced monoid
and consider the {\em factorization monoid}, which is the free monoid $Z(M)=\Cal F(\Cal A(M))$ whose basis is the set of atoms of $M$. Of course, every element $z \in Z(M)$ has a unique representation of the form $z=\prod_{a \in \Cal A(M)}a^{\nu_a(z)}$, where $\nu_a(z)\in \N_0$ and $\nu_a(z)=0$ for almost all $a \in \Cal A(M)$. For $z \in Z(M)$, the {\em length of $z$} is the natural number given by $|z|:=\sum_{a \in \Cal A(M)}\nu_a(z)$. If $z_1,z_2 \in Z(M)$ and $u=\gcd(z_1,z_2)$, we define the {\em distance} between $z_1$ and $z_2$ as $d(z_1,z_2):=\max\{|u^{-1}z_1|,|u^{-1}z_2|\}$. 
We have a canonical monoid morphism $\pi: Z(M)\rightarrow M$ given by $z\mapsto \prod_{a \in \Cal A(M)}a^{\nu_a(z)}$. For every $x \in M$, the {\em factorizations of $x$} are the elements of the set $Z(x)=\pi^{-1}(x)$. Notice that, for every $x \in M$, we have $L(x)=\{|z|\mid z \in Z(x)\}$.

Let $z,z' \in Z(x)$ be factorizations of $x \in M$ and let $N \in \R_{\geq 0}\cup\{\infty\}$. A finite sequence $z=z_0,z_1,\dots,z_k=z'$ in $Z(x)$ is called an {\em $N$-chain of factorizations} from $z$ to $z'$ if $d(z_{i-1},z_i)\leq N$ for every $i=1,\dots, k$.
Let $c(x) \in \N_0\cup \{\infty\}$ denote the smallest $N \in \N_0\cup \{\infty\}$ such that for any two factorizations $z,z' \in Z(x)$ there exists an $N$-chain of factorizations from $z$ to $z'$. The {\em set of catenary degrees} of $M$ is defined as $\operatorname{Ca}(M):=\{c(a)\mid a \in M, a \mbox{ as at least two factorizations }\} \subseteq \N_{\geq 2}\cup \{\infty\}$.

In the proof of \cite[Theorem~2.3]{CCMS09}, the authors consider the polynomial $g_{n,k}(x)=(x+n)^n(x^2-x+1)(x+k)\in \NX$, where $n,k \in \N$, and they observe that there are only two factorizations of $g_{n,k}(x)$ into atoms of $\NX$, given by
$$
g_{n,k}(x)=[(x+n)^n(x^2-x-1)]\cdot[x+1]^k
$$
and
$$
g_{n,k}(x)=[x+n]^n\cdot[(x^2-x+1)(x+1)]\cdot [x+1]^{k-1},
$$
which have lengths $1+k$ and $n+k$ respectively. Thus $c(g_{n,k}(x))=n$ and from the arbitrarity of $n$, it follows that the set of catenary degrees of $\NX$ equals $\N_{\geq 2}$.

\newpage

In the next proposition, we will determine the prime elements in the monoid $\NX$. We need a lemma.

\begin{Lemma}\label{lemma3}
Let $f_1(x),\dots,f_m(x)\in \NX\setminus \{x\}$ be atoms of degree at least~1. Then there exists $z(x) \in \Z[x]\setminus \NX$ irreducible in $\Z[x]$ such that $\deg(z)\geq 2\max_i\{\deg(f_i)\}$ and $f_i(x)z(x) \in \NX$ for every $i=1,\dots,m$. In particular, if $f(x)\neq x$ is an atom of $\NX$ with $\operatorname{deg}	(f)\geq 1$, then there exist $h(x) \in \NX$ of degree at least $1$ and $z(x) \in \Z[x]\setminus \NX$ such that $f(x)\nmidN h(x)$ and both $f(x)z(x)$ and $h(x)z(x)$ belong to $\NX$.
\end{Lemma}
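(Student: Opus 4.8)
The plan is to produce $z$ by an explicit construction that is visibly irreducible via Eisenstein's criterion, the ``non-negativity after multiplication'' being arranged by making all coefficients of $z$, apart from one small negative ``bump'', large positive multiples of $2$.

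First I would record that every $f_i$ has a nonzero, hence $\ge 1$, constant term: if $f_i(0)=0$ then $f_i=xg$ for some $g\in\NX$, and since $f_i$ is an atom and $U(\NX)=\{1\}$ this forces $g=1$, i.e. $f_i=x$, a contradiction. So for each $i$ both the constant and the leading coefficient of $f_i$ are $\ge 1$ and, as $\deg f_i\ge 1$, $f_i$ has at least two nonzero coefficients. Put $D:=\max_i\deg f_i$ (so $D\ge 1$) and let $M\ge 1$ be the largest coefficient occurring among $f_1,\dots,f_m$. I would then take
$$z(x):=x^{2D+2}-2x^{D+1}+2+2M\!\!\sum_{\substack{1\le j\le 2D+1\\ j\ne D+1}}\!\! x^{j},$$
whose coefficients, from the top down, are $1$; then $2M$ ($D$ times); then $-2$; then $2M$ ($D$ times); then $2$. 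The leading coefficient is $1$, all other coefficients are even, and the constant term $2$ is not divisible by $4$, so $z$ is Eisenstein at $2$ and, being primitive, is irreducible in $\Z[x]$. Moreover $z\notin\NX$ (the coefficient of $x^{D+1}$ is $-2<0$) and $\deg z=2D+2\ge 2\max_i\deg f_i$; a larger degree could be obtained by inserting a block of zero coefficients below the leading term, without disturbing anything.

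The remaining verification — that $z(x)f_i(x)\in\NX$ for every $i$ — is the one place calling for some care, and I would handle it by tracking coefficients. Write $f_i=\sum_ja^{(i)}_jx^j$ and $z=x^{2D+2}+\pi(x)-2x^{D+1}$ with $\pi\in\NX$. Since $x^{2D+2}f_i$ has non-negative coefficients and is supported in degrees $\ge 2D+2$, $\pi f_i$ has non-negative coefficients, and $2x^{D+1}f_i$ is supported in degrees $k\in[D+1,2D+1]$, the only coefficients of $zf_i$ that can be negative are those of $\pi f_i-2x^{D+1}f_i$ in degrees $k\in[D+1,2D+1]$. For such $k$ one has $a^{(i)}_k=0$, so the coefficient of $x^k$ in $\pi f_i$ equals $2M\sum a^{(i)}_{k-j}$, the sum over $j\in[\max(1,k-D),k]\setminus\{D+1\}$; this sum is $\ge 1$ — for $k=D+1$ because $f_i$ has a nonzero coefficient in a positive degree $\le D$, and for $D+2\le k\le 2D+1$ because the term $j=k$ contributes $a^{(i)}_0\ge 1$ — so the coefficient of $x^k$ in $\pi f_i$ is $\ge 2M$, which is at least the coefficient $2a^{(i)}_{k-D-1}\le 2M$ of $x^k$ in $2x^{D+1}f_i$. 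Hence all coefficients of $zf_i$ are non-negative, and since $zf_i\ne 0$ we get $zf_i\in\NX$. I expect this bookkeeping to be the main obstacle; the underlying point, which also explains why the bound $2D+2$ appears, is that the negative bump must sit far enough inside (here at degree $D+1$) that a shift of $f_i$ by $D+1$ is covered on both sides by the large positive coefficients of $z$.

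Finally, for the ``In particular'' part I would apply the first statement to the pair $\{f,h\}$, where $h$ is any degree-$1$ atom of $\NX$ with $h\ne f$ (for instance $h=x+1$, or $h=2x+1$ in the sole case $f=x+1$). Such an $h$ is $\ne x$ and has degree $\ge 1$, and $f\nmidN h$: a factorization $h=fu$ in $\NX$ would force $\deg u\le 0$, so $u$ would be a positive constant, whence $u=1$ gives $h=f$ while $u\ge 2$ contradicts $h$ being an atom. The polynomial $z$ furnished by the first part then satisfies all the stated requirements.
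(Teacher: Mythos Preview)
Your proof is correct and follows essentially the same strategy as the paper: build an Eisenstein polynomial whose coefficients are large positive multiples of a prime except for one small negative ``bump'' placed in the middle, then verify by direct coefficient bookkeeping that multiplying by any $f_i$ absorbs the bump. The paper uses two auxiliary primes $p\neq q$ with $q\ge\max_i f_i(1)$ and sets $z=\sum_{k=0}^{2n}a_kx^k$ with $a_{2n}=q(p+1)$, $a_n=-p$, $a_k=pq$ otherwise (Eisenstein at~$p$, degree exactly $2n$), whereas you take the prime $2$ and a monic polynomial of degree $2D+2$ with the bump at $x^{D+1}$; your coefficient verification is more explicit than the paper's, but the idea is the same.
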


\begin{proof}
Let $n:=\max_i\{\deg(f_i)\}$. Fix two prime natural numbers $p$ and $q$ such that $p\neq q$ and $q\geq \max_i\{f_i(1)\}$. Notice that $q$ is greater than every coefficient of $f_i(x)$ for every $i=1,\dots, m$. Define a polynomial $z(x)$ in the following way: $z(x):=\sum_{k=0}^{2n}a_k x^k$, where $a_{2n}=q(p+1)$, $a_n=-p$ and $a_k=pq$ for every $k\neq n, 2n$. Then $z(x)$ is a polynomial in $\Z[x]\setminus \NX$ that is irreducible by Eisenstein's criterion. A direct computation shows that $f_i(x)z(x) \in \NX$ for every $i=1,\dots,m$. (As a matter of fact, if $f(x):=\sum_{j=0}^k b_j x^j$ is one of the polynomials $f_1(x),\dots,f_m(x)$, then the coefficients of $f(x)z(x)$ are sums of non-negative integers except for at most one negative term of the form $-b_j p$ for some $j=0,\dots,k$. Moreover, for every coefficient, at least one summand is $pq b_0, (p+1)q b_k$ or $pq b_k$. It follows that all the coefficients of $f(x)z(x)$ are non-negative, that is, $f(x)z(x)\in \NX$).
The second part of the statement is now clear.
\end{proof}


\begin{Prop}\label{Fede} Let $f(x)\neq x$ be an atom of $\NX$ with $\operatorname{deg}	(f)\geq 1$. Then $f(x)$ is not a prime element of $\NX$. In particular, the only prime elements in $\NX$ are the prime integers and the polynomial $x$.
\end{Prop}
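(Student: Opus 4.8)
The plan is to obtain the first assertion as an immediate consequence of Lemma~\ref{lemma3}, and then to deduce the classification of prime elements by combining it with an elementary description of the atoms of $\NX$ together with the observation that $x$ and the prime integers are already prime in $\Z[x]$.

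For the first assertion, given an atom $f(x)\neq x$ of $\NX$ with $\deg(f)\geq 1$, I would apply the second part of Lemma~\ref{lemma3} to produce $h(x)\in\NX$ and $z(x)\in\Z[x]\setminus\NX$ with $f(x)\nmidN h(x)$ and $f(x)z(x),\,h(x)z(x)\in\NX$. I then set $b(x):=f(x)z(x)$ and $c(x):=h(x)$, both in $\NX$, and note that $b(x)c(x)=f(x)\bigl(h(x)z(x)\bigr)$ with $h(x)z(x)\in\NX$, so $f(x)\midN b(x)c(x)$. On the other hand $f(x)\nmidN c(x)$ by the choice of $h$, and $f(x)\nmidN b(x)$ because $b(x)=f(x)z(x)$ in the domain $\Z[x]$, so any factorization $b(x)=f(x)w(x)$ with $w(x)\in\NX$ would force, after cancelling $f(x)$, that $w(x)=z(x)\notin\NX$. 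Hence $f(x)$ is not a prime element of $\NX$.

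For the ``in particular'' part, recall that a prime element is by definition an atom, so I would first list the atoms of $\NX$. A constant element of $\NX$ is a positive integer, and such an integer $n\ge 2$ is an atom exactly when $n$ is a prime integer, since a factorization of $n$ in $\NX$ involves only positive integer constants by comparing degrees; the constant $1$ is the identity, hence not prime. A non-constant atom has degree $\ge 1$, and if it is different from $x$ it is not prime by the first assertion. It therefore remains to check that the prime integers and $x$ \emph{are} prime in $\NX$. Each is readily seen to be an atom (a short degree and leading-coefficient argument), and each is prime in $\Z[x]$ because $\Z[x]/(x)\cong\Z$ and $(\Z/p\Z)[x]$ are integral domains. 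Finally, if $x\midN g(x)h(x)$ in $\NX$, then $x$ divides one of $g(x),h(x)$ in $\Z[x]$, say $x\midZ g(x)$; but a polynomial with non-negative coefficients divisible by $x$ in $\Z[x]$ equals $x$ times a (non-zero) polynomial with non-negative coefficients, namely the shift of its coefficient sequence, so $x\midN g(x)$. Replacing the shift by $x$ with division by the positive constant $p$ gives the same conclusion for $p$, so $x$ and the prime integers are exactly the prime elements of $\NX$.

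Since essentially all of the work is packaged in Lemma~\ref{lemma3}, I do not expect a genuine obstacle here; the only delicate point is to keep the two divisibility relations $\midN$ and $\midZ$ carefully apart, and in particular to exploit the fact that — in contrast with a general atom $f\neq x$ of positive degree — divisibility in $\Z[x]$ \emph{does} descend to divisibility in $\NX$ for the special divisors $x$ and $p$.
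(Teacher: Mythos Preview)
Your argument is correct and follows essentially the same route as the paper: apply Lemma~\ref{lemma3} to obtain $h(x)$ and $z(x)$, then exhibit the product $f(x)z(x)\cdot h(x)$ as a witness that $f(x)$ fails primeness in $\NX$. Your write-up is in fact slightly more explicit than the paper's, since you spell out why $f(x)\nmidN f(x)z(x)$ via cancellation in $\Z[x]$ and you supply the details behind the ``in particular'' clause, which the paper leaves implicit.
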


\begin{proof} 
By Lemma \ref{lemma3}, there exist $h(x) \in \NX$ and $z(x) \in \Z[x]\setminus \NX$ such that $f(x)\nmidN h(x)$ and both $g(x):=f(x)z(x)$ and $h(x)z(x)$ belong to $\NX$. In particular, $g(x)h(x) \in \NX$ and  $f(x)\mid_\N g(x)h(x)$ but $f(x)\nmid_\N g(x)$ and $f(x)\nmid_\N h(x)$. It follows that $f(x)$ is not a prime element of $\NX$.
\end{proof}

\begin{remark}\label{atomprime}
Let $g(x)$ be an atom of $\NX$ which is not a prime element of $\NX$  and let $z(x) \in \Z[x]\setminus \NX$ be an irreducible element of $\Z[X]$ such that $g(x)z(x)\in \NX$. Then, for every prime element $f(x)$ of $\NX$, $f(x)\nmid_\N g(x)z(x)$. In fact, if $f(x)\mid_\N g(x)z(x)$ then $f(x)\mid_\Z g(x)$, because $z(x)$ is irreducible in $\Z[X]$. But $f(x)$ is a prime integer or the polynomial $x$ (Proposition \ref{Fede}), so $f(x)\mid_\Z g(x)$ if and only if $f(x)\mid_\N g(x)$. Since $g(x)$ is an atom of $\NX$ we must have $f(x)=g(x)$, a contradiction.
\end{remark}

\section{Ideals in $\NX$}\label{id}

Recall that an {\em ideal} of a commutative monoid $M$ is a subset $I$ of $M$ such that  $xy \in I$ for every $x \in I$ and every $y \in M$. A {\em prime ideal}  is a proper subset $P$ of $M$ such that for every $x,y \in M$, $xy \in P$ if and only if either $x \in P$ or $y \in P$. For any given monoid $M$, let $\Spec(M)$ denote the {\em prime spectrum of $M$}, that is, the set of all prime ideals of $M$. It is clear that the union of prime ideals is a prime ideal, and that the empty set and the set of all non-invertible elements of $M$ are prime ideals, which are respectively the least and the greatest prime ideals of $M$. Thus $\Spec(M)$ is a semigroup (commutative and with an identity) with respect to union. More precisely, $\Spec(M)$ is a topological semigroup, and $\Spec$ is a functor from the category of commutative monoids to the category of spectral topological spaces \cite{Pirashvili}. For any commutative monoid $M$, let $\asymp$ be the least congruence on $M$ for which every element in the quotient monoid $M/\!\!\asymp$ is idempotent. The congruence classes of $M$ modulo $\asymp$ are the archimedean components of $M$ \cite[Section~1]{Proc}. The canonical projection $M\to M/\!\!\asymp$ induces an isomorphism $\Spec(M)\cong \Spec(M/\!\!\asymp)$ of topological semigroups \cite[Lemma 2.3]{Pirashvili}. 

\begin{Prop}\label{3.1}
The only finitely generated prime ideals of $\NX$ are $\emptyset$, $(x)$ and the ideals of the form $(p_1,\dots,p_n)$ or $(p_1,\dots,p_n, x)$, where $n \in \N$ and $p_1,\dots,p_n$ are prime numbers.
\end{Prop}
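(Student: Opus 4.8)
The plan is to prove both inclusions, the substantial one being that every finitely generated prime ideal of $\NX$ lies in the displayed list. For the easy inclusion, each set in the list is finitely generated, and it is a prime ideal because $x$ and the prime integers are prime elements of $\NX$ (Proposition~\ref{Fede}), because a principal ideal generated by a prime element is prime, and because a union of prime ideals is prime; indeed $\emptyset$ and $(x)$ are of this kind, and $(p_1,\dots,p_n)=\bigcup_i(p_i)$, $(p_1,\dots,p_n,x)=(x)\cup\bigcup_i(p_i)$.

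For the converse, let $P$ be a finitely generated prime ideal of $\NX$; we may assume $P\neq\emptyset$. Write $P=(a_1,\dots,a_k)=\bigcup_i(a_i)$ and discard any generator that lies in the ideal generated by the remaining ones, so that $a_1,\dots,a_k$ is a minimal (hence pairwise non-dividing, in particular distinct) generating set; since $1\notin P$, each $a_i\neq1$. The first step is to check that every $a_i$ is an atom: if $a_i=bc$ with $b,c\neq1$, then $bc\in P$, so by primeness we may assume $b\in P$, whence $a_j\midN b$ for some $j$; since $b\midN a_i$ as well, $a_i\in(a_j)$, so $j=i$ by minimality, and then $a_i\midN b\midN a_i$ forces $b=a_i$ ($\NX$ is cancellative and reduced), so $c=1$, a contradiction. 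Every atom of $\NX$ is a prime integer, or $x$, or an atom of degree $\ge1$ different from $x$; the last type is necessarily primitive, since otherwise a constant factor would split off. So it suffices to show that $P$ has no generator of the third type: for then the generators are prime integers together with possibly $x$, so $P=(p_1,\dots,p_n)$ or $(p_1,\dots,p_n,x)$ with $p_1,\dots,p_n$ the prime-integer generators, and since the subcase $n=0$ forces $P=(x)$ (as $P\neq\emptyset$), this exhausts the list.

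Assume, for a contradiction, that $P$ has a generator that is an atom of degree $\ge1$ distinct from $x$, and let $f_1,\dots,f_m$ ($m\ge1$) be all such generators, indexed so that $\deg f_1=\min_{1\le i\le m}\deg f_i$. Pick $t\in\N$ with $h:=x+t$ not among the $a_j$; then $h$ is an atom of degree $1$, $h\neq x$, and $h\notin P$. Apply the first part of Lemma~\ref{lemma3} to $f_1,\dots,f_m,h$: this produces $z(x)\in\Z[x]\setminus\NX$, irreducible in $\Z[x]$, with $\deg z\ge2\max\{\deg f_1,\dots,\deg f_m,1\}$ and $f_1z,\dots,f_mz,hz\in\NX$. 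Put $g:=f_1z\in\NX$. Then $gh=f_1\cdot(hz)$ with $hz\in\NX$, so $f_1\midN gh$ and hence $gh\in(f_1)\subseteq P$; since $P$ is prime and $h\notin P$, it follows that $g\in P$. The heart of the proof is to contradict this by showing $g=f_1z\notin P$. Suppose some generator $a_j$ satisfies $a_j\midN g$. If $a_j=f_1$, then cancelling $f_1$ yields $z\in\NX$, which is false. If $a_j$ is a prime integer or $a_j=x$, then $a_j$ is a prime element of $\NX$ (Proposition~\ref{Fede}) that $\midN$-divides $g=f_1z$, contradicting Remark~\ref{atomprime} applied to the atom $f_1$ (which is not prime, again by Proposition~\ref{Fede}) and the $\Z[x]$-irreducible polynomial $z$. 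If $a_j=f_i$ with $i\neq1$, then $f_i\midN f_1z$, so $f_i\midZ f_1z$; since $z$ is irreducible in $\Z[x]$ of degree $>\deg f_i$, unique factorization in $\Z[x]$ forces $f_i\midZ f_1$; but $f_1,f_i$ are primitive with $\deg f_i\ge\deg f_1$, so $\deg f_i=\deg f_1$, and comparing contents gives $f_1=f_i$, contradicting $i\neq1$. Hence no generator divides $g$, so $g\notin P$, which contradicts the previous step and completes the argument.

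I expect the minimal-degree choice of $f_1$ to be the genuinely delicate point. Without it, a generator $f_i$ of larger degree might conceivably satisfy $f_i\midN f_1z$ even though $f_i\nmidN f_1$ (the quotient $f_1/f_i$ in $\Z[x]$ having a negative coefficient), and excluding this would require building $z$ so as to avoid the finitely many such quotients; choosing $f_1$ of least degree among the generators of the third type makes $f_i\midZ f_1$ outright impossible for $i\neq1$ and disposes of the difficulty at once.
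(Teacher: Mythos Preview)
Your proof is correct and takes a genuinely different route from the paper's. Both arguments begin identically: reduce to a minimal generating set, observe that all generators must be atoms, and suppose for contradiction that some generator is a non-prime atom (degree $\ge 1$, $\neq x$). Both then invoke Lemma~\ref{lemma3} to produce an irreducible $z\in\Z[x]\setminus\NX$ with $f_i z\in\NX$ for all the ``bad'' generators $f_i$ and for a chosen linear $h\notin P$, and both use Remark~\ref{atomprime} to rule out divisibility by the prime generators. The divergence is in how the contradiction is reached. The paper argues that each product $g_i z$ must land in some $g_{j(i)}\NX$ with $j(i)\neq i$, obtains a cycle $g_{i_1}\to g_{i_2}\to\dots\to g_{i_\ell}\to g_{i_1}$, multiplies the relations around the cycle to get $z^\ell=f_1\cdots f_\ell\in\NX$, and concludes $z\in\NX$ from irreducibility. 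You instead single out a bad generator $f_1$ of \emph{minimal degree}, show $f_1 z\in P$, and then rule out $f_i\midN f_1 z$ for each $i$ directly: since $z$ is irreducible of degree exceeding $\deg f_i$, one gets $f_i\midZ f_1$, and the minimal-degree choice together with primitivity forces $f_i=f_1$. Your approach is shorter and avoids the cycle bookkeeping; the paper's cycle argument, on the other hand, does not require isolating a generator of least degree and makes no use of the degree bound $\deg z\ge 2\max_i\deg f_i$ beyond ensuring $g_i z\neq g_j$. Your final paragraph correctly identifies the minimal-degree choice as the device that replaces the paper's cycle argument.
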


\begin{proof}
Let $I\subset \NX$ be a non-empty finitely generated prime ideal of $\NX$ generated by $g_1(x),\dots, g_m(x)$, that is, $I=g_1(x)\NX\cup g_2(x)\NX\cup\dots\cup g_m(x) \NX$. We can assume, without loss of generality, that $m$ is the minimal number of generators of $I$. Notice that, under this hypothesis on $m$, all the generators are atoms of $\NX$. Indeed, if $g_1(x)$ is not an atom of $\NX$ and $g_1(x)=f_1(x)\cdot \dots \cdot f_k(x)$ is a factorization of $g_1(x)$ into atoms of $\NX$, then $f_j(x) \in I$ for some $j=1,\dots, k$, since $I$ is a prime ideal. This means that there exists $i=2,\dots, m$ such that $f_j(x) \in g_i(x)\NX$, that is, there exists $i=2,\dots, m$ such that $g_i(x)\mid_\N f_j(x)$. Since $f_j(x)$ is an atom, it follows that $g_i(x)=f_j(x)$ and so, in particular, $g_1(x)\NX\subseteq g_i(x)\NX$. Therefore $I=g_1(x)\NX\cup g_2(x)\NX\cup\dots\cup g_m(x) \NX=g_2(x)\NX\cup\dots\cup g_m(x) \NX$ is generated by $g_2(x),\dots, g_m(x)$, which contradicts the minimality of $m$.

Now let $\Omega$ be the subset of $\{1,\dots, m\}$ consisting of all the indices $i$ for which $g_i(x)$ is not a prime element of $\NX$. We want to show that $\Omega=\emptyset$. Assume the contrary. Up to relabelling the generators of $I$, we can assume that $\Omega=\{1,2,\dots,t\}$ for some $1\leq t\leq m$. Fix an irreducible polynomial $h(x)\neq x$ of degree 1 in $\NX$ such that $g_i\nmid_\N h(x)$ for every $i=1,\dots,m$, that is, $h(x)\notin I$. By Lemma \ref{lemma3}, there exists $z(x) \in \Z[x]\setminus \NX$ of degree $\deg(z)\geq 2\max_i\{\deg(g_i)\}$, irreducible in $\Z[x]$ such that $h(x)z(x) \in \NX$ and $g_i(x)z(x) \in \NX$ for every $i\in \Omega$. Notice that, since $h(x)z(x) \in \NX$, the leading coefficient of $z(x)$ must be positive. We have that, for every $i \in \Omega$,  $g_i(x)z(x)h(x) \in g_i(x)\NX \subseteq I$ but neither $g_i(x)z(x)$ nor $h(x)$ belongs to $g_i(x)\NX$. Therefore, by Remark \ref{atomprime}, there exists an index $j:=j(i)\neq i$ in $\Omega$ with $g_i(x)z(x) \in g_j(x)\NX$, because $h(x) \notin I$ (in particular, this shows that if $\Omega\neq \emptyset$, then $\Omega$ has at least two elements). Notice that $g_i(x)z(x)\neq g_j(x)$ because $\deg(g_i z)>\deg(g_j)$. Up to reordering the indices, we see that there exist an element $\ell\neq 1$ in $\Omega$ and polynomials $f_1(x),\dots,f_\ell(x) \in \NX$ of degree at least 1, such that
$$
g_2(x)z(x)=g_1(x)f_1(x),\  g_3(x)z(x)=g_2(x)f_2(x),\  \dots,\  g_1(x)z(x)=g_\ell(x)f_\ell(x).
$$
Using these relations, we get $g_2(x)z(x)^\ell=g_2(x)f_1(x)f_2(x)\cdot \dots \cdot f_\ell(x)$, and so $z(x)^\ell=f_1(x)f_2(x)\cdot \dots \cdot f_\ell(x)$. Since $z(x)$ is irreducible in $\Z[x]$ and its leading coefficient is positive, we must have $z(x)=f_1(x)=\dots=f_\ell(x)\in \NX$, a contradiction. 
\end{proof}

For an example of a monoid with a finitely generated prime ideal that is not generated by prime elements, consider the additive monoid $\N_0\setminus\{1\}$. In this monoid, the atoms are only $2$ and $3$, and there are no prime elements. The ideal generated by $2$ and $3$ is the ideal of all non-units, which is the greatest prime ideal of the monoid.

\bigskip

As we saw in Proposition~\ref{Fede},  the only prime elements of $\NX$ are the positive prime integers and the polynomial $x$. Since every polynomial in $\NX$ is the product of its content and its primitive part in a unique way, we have a direct-product decomposition $\NX=\N\times P$, where $P$ denotes the multiplicative monoid of all primitive polynomials (polynomials with content 1) belonging to $\NX$. Every polynomial in $P$ is the product of a power $x^n$ of the prime element $x$ and a primitive polynomial in $\NX$ with non-zero constant term in a unique way, so we have a direct-product decomposition $P=\N \times P_0 $, where $P_0 $ denotes the multiplicative monoid of all primitive polynomials with non-zero constant term. Thus $\NX$ is the direct product of a free commutative monoid $\N_0^{(I)}$ and the monoid $P_0 $. Here the free set $I$ of generators of $\N_0^{(I)}$ consists of all positive prime integers and the polynomial $x$, which are the prime elements in the monoid $\NX$, as we have said above. Hence all problems concerning the non-uniqueness of factorization in $\NX$ occur in its submonoid $P_0 $.

\begin{Prop} There is an automorphism $\varphi\colon P_0\to P_0$ that maps every $a_nx^n+_{n-1}x^{n-1}+\dots+a_1x+a_0\in P_0$ of degree $n$ to $a_0x^n+a_1x^{n-1}+\dots+a_{n-1}x+a_n$. Moreover, $\varphi$ is an involution in $P_0$, i.e., $\varphi^2$ is the identity automorphism of $P_0$.\end{Prop}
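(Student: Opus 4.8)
The plan is to realize $\varphi$ as the classical \emph{reciprocal polynomial} map and then verify the three facts needed: that it lands in $P_0$, that it is multiplicative, and that it squares to the identity. Concretely, for $f(x)=\sum_{i=0}^{n}a_ix^i\in P_0$ of degree $n$ (so $a_n\neq0$, and $a_0\neq0$ since $f$ has non-zero constant term), I would set $\varphi(f)(x):=x^{n}f(1/x)=\sum_{i=0}^{n}a_ix^{n-i}$, which is precisely the polynomial $a_0x^n+a_1x^{n-1}+\dots+a_{n-1}x+a_n$ in the statement.

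First I would check that $\varphi$ maps $P_0$ into $P_0$ and preserves degree. The sequence of coefficients of $\varphi(f)$ is just the reversal of that of $f$; hence all coefficients lie in $\N_0$, the content is unchanged (so $\varphi(f)$ is again primitive), the leading coefficient of $\varphi(f)$ equals $a_0\neq0$ (so $\deg\varphi(f)=n$), and the constant term of $\varphi(f)$ equals $a_n\neq0$. Thus $\varphi(f)\in P_0$ with $\deg\varphi(f)=\deg f$.

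Next comes multiplicativity, which is the only step where the arithmetic of $\NX$ genuinely intervenes: since leading coefficients in $\NX$ are positive integers and cannot cancel, one has $\deg(fg)=\deg f+\deg g$ for all $f,g\in\NX$. Granting this, if $\deg f=n$ and $\deg g=m$ then
$$\varphi(fg)(x)=x^{\,n+m}(fg)(1/x)=\bigl(x^{n}f(1/x)\bigr)\bigl(x^{m}g(1/x)\bigr)=\varphi(f)(x)\,\varphi(g)(x),$$
and $\varphi(1)=1$, so $\varphi$ is a monoid endomorphism of $P_0$ (the right-hand side lies in $P_0$ by the well-definedness step already carried out).

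Finally, for the involution property I would use the degree identity just established: for $f$ of degree $n$,
$$\varphi\bigl(\varphi(f)\bigr)(x)=x^{n}\,\varphi(f)(1/x)=x^{n}\sum_{i=0}^{n}a_ix^{i-n}=\sum_{i=0}^{n}a_ix^{i}=f(x),$$
so $\varphi\circ\varphi=\mathrm{id}_{P_0}$; in particular $\varphi$ is bijective, hence an automorphism of $P_0$, and it is an involution. There is no real obstacle: the one point requiring attention is that everything hinges on the hypothesis that elements of $P_0$ have non-zero constant term, which is exactly what guarantees $\deg\varphi(f)=\deg f$ — without it, $\varphi$ would neither be well defined on $P_0$ nor satisfy $\varphi^2=\mathrm{id}$.
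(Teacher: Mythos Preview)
Your proof is correct and follows exactly the same approach as the paper: the paper's proof simply observes that $\varphi(f(x))=f(1/x)\,x^{\deg f}$ and declares the verification straightforward, while you have spelled out that verification (well-definedness on $P_0$, multiplicativity via degree additivity, and $\varphi^2=\mathrm{id}$) in full.
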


\begin{proof} Notice that $\varphi(f(x))=f(\dfrac{1}{x})x^{\delta(f(x))}$ for every $f(x)\in P_0$ of degree $\delta(f(x))$. Now the verification is straightforward.\end{proof}

It would be interesting to determine all the prime ideals of the monoids $\NX$ or $P_0$, not only the finitely generated ones determined in Proposition~\ref{3.1}. It would be nice to determine at least the prime ideals of height $1$ of these two monoids. In fact, we will see in Section~\ref{n} that the monoid $\NX$ is not a Krull monoid, but can be described using techniques similar to those employed in the study of the structure of Krull monoids, essentially making use of derivations instead of monoid morphisms. Thus the description of prime ideals of height $1$ in $\NX$ could be relevant.

Here are some prime ideals of the monoid $P_0$. There are two canonical homomorphisms of $P_0$ into the multiplicative monoid $\N$. The first is $v_0\colon P_0\to\N$ (valuation in $0$), defined by $v_0(f(x))=f(0)$ for every $f(x)\in P_0$. The second is $\ell\colon P_0\to\N$, which associates to every $f(x)\in P_0$ its leading coefficient $\ell(f(x))$. Since the multiplicative monoid $\N$ is a free commutative monoid with free set of generators the set $\Cal P_0$ of all positive prime integers, i.e., $\N\cong\N_0^{(\Cal P_0)}$, it follows that there is, for each positive prime integer $p$, a monoid morphism $v_{0,p}\colon P_0\to\N_0$ into the additive monoid $\N_0$, defined,  for every $f(x)\in P_0$, by $v_{0,p}(f(x))=\,$``exponent of $p$ in a prime factorization of $f(0)$''. There is also, for each prime integer $p$, a monoid morphism $\ell_{p}\colon P_0\to\N_0$, defined,  for every $f(x)\in P_0$, by $\ell_{p}(f(x))=\,$``exponent of $p$ in a prime factorization of the leading coefficient of $f(x)$''. Thus we have two countable families of valuations $P_0\to\N_0$, indexed in the prime numbers. A further valuation $\delta\colon P_0\to\N_0$ is given by $\delta(f(x))=\,$``degree of the polynomial $f(x)$''.

\medskip

Correspondingly, we have prime ideals of the monoid $P_0$:

(1) The prime ideal of $P_0$ consisting of all polynomials $f(x)\in P_0$ with $f(0)\ne 1$.

(2) The prime ideal of $P_0$ consisting of all polynomials $f(x)\in P_0$ with leading coefficient $\ne 1$.

(3) For every prime integer $p$, the prime ideal of $P_0$ consisting of all polynomials $f(x)\in P_0$ such that $p|f(0)$.

(4) For every prime integer $p$, the prime ideal of $P_0$ consisting of all polynomials $f(x)\in P_0$ with leading coefficient divisible by $p$.

(5) The greatest prime ideal $P_0\setminus\{1\}$ of the monoid $P_0$.

\bigskip

In the following, we want to investigate some properties of the polynomials $f(x) \in \Z[x]$ for which $f(x)\Z[x]\cap \NX \neq \emptyset$. Consider
$$
E:=\{f(x) \in \Z[x]\mid f(x)\Z[x]\cap \NX \neq \emptyset\}.
$$
Then $E$ is a submonoid of $\Z[x]$ containing $\NX$. It is immediate to check that both the containments $\NX\subseteq E$ and $E\subseteq \Z[x]$ are proper. For instance, $x-1 \in \Z[x]\setminus E$ while $x^2-x+1 \in E \setminus \NX$. The atoms of $E$ are exactly the polynomials of $E$ that are irreducible in $\Z[x]$, that is, $\Cal A(E)=\Cal A(\Z[x])\cap E$. This follows from the fact that for any pair of polynomials $f(x) \in E$ and $g(x)\in \Z[x]$, if $g(x)\mid_\Z f(x)$, then $g(x) \in E$. In particular, every element of $E$ can be written in a unique way, up to associates, as a product of atoms.

For every element $\lambda(x)\in \Cal A(E)$ we can define the ideal 
$$
P_\lambda:=\lambda(x)\Z[x]\cap \NX=\lambda(x)E\cap \NX,
$$
which turns out to be a prime ideal of $\NX$. It corresponds to the valuation $v_\lambda: \NX \rightarrow (\N_0,+)$ that assigns to any polynomial $f(x) \in \NX$ the exponent of $\lambda(x)$ in a factorization of $f(x)$ into irreducible elements of $\Z[x]$. Notice that $P_\lambda$ is the contraction of the principal prime ideal $\lambda(x)E$  of $E$. However, the ideal $P_\lambda$ is principal if and only if $\lambda(x)$ is a prime element of $\NX$. In this case, $P_\lambda=\lambda(x)\NX$ is the principal ideal generated by $\lambda(x)$. On the other hand, if $\lambda(x)$ is in $\NX$ but it is not a prime element or if $\lambda(x) \in \Cal A(E)\setminus \NX$, then $P_\lambda$ is not finitely generated. A set of generators for $P_\lambda$ is given by $\lambda(x)\Z[x]\cap \Cal A (\NX)$.

\begin{Prop}
For every $\lambda(x) \in \Cal B:=\Cal A(E)\cap \NX$, define
$$
A_\lambda:=\{f(x)\in \Z[x] \mid \lambda(x)^k f(x) \in \NX\ \mbox{\rm for some } k \in \N_0\}.
$$
Then $A_\lambda$ is a semiring containing $\NX$ and $\NX=\cap_{\lambda \in \Cal B}A_\lambda$. 
\end{Prop}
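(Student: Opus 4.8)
The plan is to verify the three assertions—$A_\lambda$ is a semiring, $\NX \subseteq A_\lambda$ for each $\lambda$, and $\NX = \bigcap_{\lambda\in\Cal B}A_\lambda$—in increasing order of difficulty, with the intersection formula being the real content. First I would record that $\NX\subseteq A_\lambda$ is immediate: for $f(x)\in\NX$ take $k=0$. The semiring claim is a routine closure check. Closure under addition: if $\lambda^{k}f\in\NX$ and $\lambda^{m}g\in\NX$ with, say, $k\le m$, then $\lambda^{m}(f+g)=\lambda^{m-k}(\lambda^{k}f)+\lambda^{m}g$ is a sum of two elements of $\NX$ (here one uses that $\lambda\in\NX$, so $\lambda^{m-k}\in\NX$ and $\NX$ is closed under multiplication), hence lies in $\NX$; so $f+g\in A_\lambda$. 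Closure under multiplication: if $\lambda^{k}f,\lambda^{m}g\in\NX$ then $\lambda^{k+m}fg=(\lambda^{k}f)(\lambda^{m}g)\in\NX$, so $fg\in A_\lambda$. The constants $0,1$ obviously lie in $A_\lambda$. This makes $A_\lambda$ a subsemiring of $\Z[x]$ containing $\NX$.

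For the intersection, the containment $\NX\subseteq\bigcap_{\lambda}A_\lambda$ is just the first observation applied for every $\lambda\in\Cal B$. The substance is the reverse inclusion $\bigcap_{\lambda\in\Cal B}A_\lambda\subseteq\NX$. So suppose $f(x)\in\Z[x]$ lies in $A_\lambda$ for every $\lambda\in\Cal B$; I must show $f(x)\in\NX$. First, $f\in A_\lambda$ for some $\lambda$ forces $\lambda^{k}f\in\NX\subseteq E$ and hence $f\in E$ (using, as noted in the excerpt, that $E$ is closed under $\Z$-divisors—more precisely $f\mid_\Z \lambda^k f\in E$, so $f\in E$; one should double-check the excerpt's phrasing, but the needed fact is exactly "$g\mid_\Z h$ and $h\in E$ imply $g\in E$"). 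Thus $f$ has a factorization into atoms of $E$, i.e. $f=\pm\prod_i \mu_i(x)$ with each $\mu_i\in\Cal A(E)$. Now I split the atoms of $E$ into those lying in $\Cal B=\Cal A(E)\cap\NX$ and those lying in $\Cal A(E)\setminus\NX$. The key mechanism: for $\mu\in\Cal A(E)$, membership "$f\in A_\mu$" when $\mu$ is $\Z$-irreducible records that $\mu^{k}f\in\NX$, and since $\mu$ is prime in $\Z[x]$, the valuation $v_\mu$ (the exponent of $\mu$ in the $\Z[x]$-factorization, as in the discussion of $P_\lambda$ just above) satisfies $v_\mu(f)\ge -k$ trivially but more usefully: if $f\notin\NX$ the obstruction is a "bad" $\Z$-irreducible factor of $f$ that does not lie in $\NX$, and multiplying by a power of a single $\lambda\in\Cal B$ cannot clear it. I would argue: if $f\notin\NX$, then in the factorization $f=\pm\prod\mu_i$ at least one $\mu_{i_0}\notin\NX$ (otherwise $f$ is, up to sign, a product of elements of $\NX$, hence in $\NX$ once we check the sign is $+$, which follows because the leading coefficient of $\lambda^k f$ is positive for $\lambda\in\NX$, forcing the leading coefficient of $f$ to be positive). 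Fix such a $\mu_{i_0}$; it is a $\Z$-irreducible atom of $E$ not in $\NX$. The claim to nail down is that there exists $\lambda\in\Cal B$ with $f\notin A_\lambda$, contradicting the hypothesis—equivalently, that for every $k\in\N_0$, $\lambda^k f\notin\NX$ for a suitable choice of $\lambda$. Here is where Lemma~\ref{lemma3} enters: given the finitely many atoms involved, it produces a $\Z$-irreducible $z\notin\NX$ of large degree with the positivity behavior we need; one uses it to manufacture, or to detect, the obstruction. Concretely, I expect the cleanest route is: take any $\lambda\in\Cal B$ (such $\lambda$ exist, e.g. $\lambda=x^2+x+1$ or indeed any irreducible element of $\NX$ of degree $\ge1$—note $\Cal B$ is nonempty); since $\mu_{i_0}\notin\NX$ is $\Z$-irreducible and is not an associate of $\lambda$ (as $\lambda\in\NX$, $\mu_{i_0}\notin\NX$), $\mu_{i_0}$ persists as a factor of $\lambda^k f$ for all $k$, and a polynomial having a $\Z$-irreducible factor outside $\NX$ cannot itself be forced into $\NX$ merely by multiplying by powers of $\lambda$—but that last assertion is false in general (e.g. $(x+1)(x^2-x+1)=x^3+1\in\NX$), so I cannot use a fixed $\lambda$.

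Therefore the correct argument must choose $\lambda$ depending on $f$. The right statement is: if $f\notin\NX$ then there is some atom $\lambda\in\Cal B$ such that no power $\lambda^k f$ lies in $\NX$. I would prove this by taking $\lambda$ among the atoms of $E$ dividing $f$ that already lie in $\NX$ if any such exist and are "large" in an appropriate sense; if $f$ has no factor in $\Cal B$ at all, pick $\lambda$ to be any element of $\Cal B$ of degree exceeding $\deg f$—then for $k\ge1$, $\lambda^k f$ has a $\Z$-irreducible factor $\lambda$ with $\lambda\notin\NX$... no. Let me instead use the valuation viewpoint: for a $\Z$-irreducible $\lambda\in\NX$ that is an atom of $E$, the hypothesis $f\in A_\lambda$ gives $k$ with $\lambda^k f\in\NX$; applying the valuation $v_\mu$ for the bad factor $\mu=\mu_{i_0}$ gives $v_\mu(\lambda^k f)=v_\mu(f)\ge1$ since $\lambda\ne\mu$ up to units, so $\lambda^k f$ still has the bad factor $\mu$. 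Now multiply instead by a well-chosen $\lambda$: by Lemma~\ref{lemma3} applied to the single atom... — actually the genuinely clean path: membership of $f$ in every $A_\lambda$, for $\lambda$ ranging over $\Cal B$, in particular includes $\lambda$ with $v_\mu(\lambda)=0$ for the bad $\mu$; then $\lambda^k f$ has $\mu$ as a factor with positive multiplicity, yet $\lambda^k f\in\NX$; this is not yet a contradiction because $\NX$ does contain polynomials with $\Z$-irreducible non-$\NX$ factors.

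I will therefore organize the proof around the following sharper lemma, which I expect to be the main obstacle: for $f\in\Z[x]$, if $f\notin\NX$, then there exists $\lambda\in\Cal B$ with $\lambda^k f\notin\NX$ for all $k\ge0$. To prove it, write $f$'s $\Z[x]$-factorization and let $\mu$ be a $\Z$-irreducible factor with $\mu\notin\NX$; I then need a $\lambda\in\Cal B$ whose presence, to any power, cannot "compensate" $\mu$. The mechanism should be a positivity/leading-and-trailing-coefficient argument together with Lemma~\ref{lemma3}: choose $\lambda$ to be a degree-one atom of $\NX$ of the form $x+p$ with $p$ a large prime, large relative to all coefficients of $f$; then analyze the coefficients of $(x+p)^k f(x)$ directly and show some coefficient stays negative for every $k$ (the negative coefficient of $f$ coming from $\mu$ propagates, because $(x+p)^k$ has all positive coefficients and convolving cannot turn a suitably isolated large-magnitude negative coefficient positive when $p$ dominates). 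Once that lemma is in hand, the theorem follows: if $f\in\bigcap_{\lambda\in\Cal B}A_\lambda$ but $f\notin\NX$, the lemma produces $\lambda\in\Cal B$ with $f\notin A_\lambda$, a contradiction; hence $f\in\NX$, giving $\bigcap_{\lambda\in\Cal B}A_\lambda\subseteq\NX$, and combined with the trivial reverse inclusion and the semiring check, this completes the proof.
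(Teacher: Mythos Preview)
Your verification that each $A_\lambda$ is a subsemiring of $\Z[x]$ containing $\NX$, and hence that $\NX\subseteq\bigcap_{\lambda\in\Cal B}A_\lambda$, is correct.

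The gap is in the reverse inclusion. After several false starts you settle on $\lambda=x+p$ for a large prime $p$, claiming that a negative coefficient of $f$ must persist in $(x+p)^k f$ for every $k$. This is false. Take $f(x)=x^2-x+1\notin\NX$: a direct computation gives
\[
(x+2)^3(x^2-x+1)=x^5+5x^4+7x^3+2x^2+4x+8\in\NX,
\]
so $f\in A_{x+2}$. This is not an accident of $p=2$: since $f(t)>0$ for all $t\ge0$ and $f$ has positive leading coefficient, a P\'olya-type positivity argument (applied to the homogenization of $f$) shows that $(x+p)^N f(x)$ has non-negative coefficients for all sufficiently large $N$, for \emph{every} integer $p\ge1$; hence $f\in A_{x+p}$ for every prime $p$, and no degree-one $\lambda$ witnesses $f\notin\bigcap_\lambda A_\lambda$. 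Your heuristic that convolving with a positive sequence cannot cancel an isolated negative coefficient fails precisely because, once $k$ is large, the neighbouring positive contributions dominate.

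The paper's argument bypasses the whole detour through $\Z[x]$-factorizations and ``bad'' irreducible factors. Given $f\in\Z[x]\setminus\NX$ of degree $d$, it takes $\lambda(x)=x^{d+1}+2$, which lies in $\Cal B$ by Eisenstein at the prime~$2$. The point is that $(x^{d+1}+2)^k=\sum_{j}\binom{k}{j}2^{k-j}x^{(d+1)j}$ has its monomials spaced $d+1$ apart, strictly more than $\deg f$, so in the product $(x^{d+1}+2)^k f(x)$ the coefficient of $x^{(d+1)j+i}$ (for $0\le i\le d$) is exactly $\binom{k}{j}2^{k-j}a_i$, with no overlap. Any negative $a_i$ therefore survives for every $k$, giving $f\notin A_{x^{d+1}+2}$. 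The idea you were missing is to choose $\lambda$ of degree \emph{greater} than $\deg f$, which makes the convolution trivial; degree-one $\lambda$'s cannot work in general.
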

\begin{proof}
First of all, notice that $x^n+2 \in \Cal B$ for every $n \in \N$. Now, if $f(x) \in \Z[x]\setminus \NX$ is a polynomial of degree $d$, then $(x^{d+1}+2)^kf(x) \notin \NX$ for every $k \in \N$. Therefore $f(x) \notin\cap_{\lambda \in \Cal B}A_\lambda$.
\end{proof}

\section{Chains of principal ideals}\label{cpi}

As we have already remarked, there are two partial orders on the monoid $\NX$: the partial orders $|_{\Z}$ and $|_{\N}$, which is the algebraic order on $\NX$.
Clearly, for every $f,g\in\NX$, $f|_{\Z}g$ if and only if the principal ideal $(f)$ of $\Z[x]$ generated by $f$ contains the principal ideal $(g)$ of $\Z[x]$ generated by $g$. Thus the mapping $f\mapsto (f)$ is an embedding of partially ordered sets of $(\NX,|_{\Z})$ into the partially ordered set of all principal ideals of $\Z[x]$, which is a partially ordered subset of the lattice of all ideals of $\Z[x]$. Notice that both the partially ordered set of all principal ideals of $\Z[x]$ and the partially ordered set of all ideals of $\Z[x]$ are modular lattices, that the first is a partially ordered subset of the second, but is not a sublattice of the second (in the first, the least upper bound is given by the least common divisor, and in the second the least upper bound is given by the sum of ideals, but in $\Z[x]$ there are ideals that are not principal). 

As we have already said, for a commutative monoid $M$, an {\em ideal} in $M$ is a subset $I$ of $M$ such that $IM\subseteq I$. A subset
$I$
of a commutative semiring
$S$
is an {\em ideal}
of $S$ if
$a+b\in I$ for all $a,b\in I$ and $IS\subseteq I$. An ideal is {\em principal} if it is of the form $aM$ for some $a\in M$ (of the form $aS$ for some $a\in S$). Here $M$ is a commutative monoid and $S$ a commutative semiring. 

\medskip

The monoid $\NX$ does not satisfy the ascending chain condition on ideals. Indeed, we can recursively define the following ideals of $\NX$: set $I_1:=(x+1)\NX$ and, for $n\geq 2$, set $I_n:=I_{n-1}\cup (x+n)\NX$. It is clear that the chain $I_1\subsetneq I_2\subsetneq \dots$ is not stationary.

\medskip

The set $\Cal L_p(\NX)$ of all principal ideals of $\NX$ is partially ordered by set inclusion, and has the empty ideal as its least element and the improper ideal $\NX$ as its greatest element. Every principal ideal in $\NX$ has a unique generator. There is an anti-isomorphism of partially ordered sets, that is, an order-reversing one-to-one correspondence, between the partially ordered set $(\NX,|_{\N})$ and the partially ordered set $(\Cal L_p(\NX),\subseteq)$.

Given any factorization $a=a_1a_2\dots a_n$ in $\NX$, we can associate to the factorization the chain of principal ideals $$a\NX\subseteq a_1a_2\dots a_{n-1}\NX\subseteq a_1a_2\dots a_{n-2}\NX\subseteq a_1\NX\subseteq \NX$$
in the spirit of
\cite[Theorem~2.1]{FacchiniFassina}. More precisely, let $a$ be an element of $\NX$, $$\Cal F(a):=\{\,(a_1,a_2,\dots,a_n)\mid n\ge 1,\ a_i\in \NX,\ a_1a_2\dots a_n=a\,\}$$ be the set of all factorizations of $a$, and \begin{equation*}
\begin{split}\Cal C_a:=\{\,(I_0=a\NX\subseteq I_1\subseteq I_2\subseteq\dots\subseteq I_{n-1}\subseteq I_n= \NX) \mid \\ n\ge 1, I_j\ \mbox{\rm a principal ideal of }\NX\,\}\end{split}
\end{equation*} be the set of all finite chains of principal ideals from $a\NX$ to $\NX$.
Let $f\colon \Cal F(a)\to \Cal C_a$ be the mapping defined by sending any factororization $$(a_1,a_2,\dots, a_n)\in \Cal F(a)$$ to the chain $$(a\NX=a_1a_2\dots a_{n}\NX\subseteq a_1a_2 \dots  a_{n-1}\NX \subseteq  \dots\subseteq  a_1\NX \subseteq  \NX ).$$ Then the mapping $f$ is a one-to-one correspondence. Hence the factorizations of $a$ in $\NX$ are decribed by the finite chains of principal ideals in the interval $[a\NX,\NX]_{\Cal L_p(\NX)}$. Notice that, for every $a\in\NX$, the interval $[a\NX,\NX]_{\Cal L_p(\NX)}$ is always a finite set. It would be very interesting to determine which finite partially ordered sets can be realized as $[a\NX,\NX]_{\Cal L_p(\NX)}$ for some $a\in\NX$.

\section{Krull monoids, derivations}\label{n}

An integral domain $R$ with field of fractions $K$ is a {\em Krull domain} if there exists a set $\{\,v_i\mid i\in I\,\}$ of valuations $v_i\colon K\to {\Z}\cup\{+\infty\}$ with the following two properties: (1) $R=\{\,x\in K\mid v_i(x)\ge0$ for every $i\in I\}$;  (2)~for every $x\in K$ the set $\{\,i\in I\mid v_i(x)\ne0\,\}$ is finite.  The notion of Krull domain has been generalized to the case of commutative monoids in \cite{Chouinard} by Chouinard, who introduced the notion of {\em Krull monoid}. As good references for the theory of Krull monoids, we mention the monographs \cite{GHK06} and \cite{HK98}. Here, we briefly recall the definition and some noteworthy results.

A monoid homomorphism $f\colon M\to M'$ is called a {\em divisor homomorphism\/} if, for
every
$x,y\in M$, $f(x)\le f(y)$ implies $x\le y$. Here $\le$ denotes the algebraic pre-orders. A monoid $M$ is a {\em
Krull monoid} if there exists a divisor homomorphism of $M$ into a factorial monoid. A commutative integral domain $R$ is a Krull domain if and only if the monoid $R^*:=R\setminus\{0\}$ is a Krull monoid \cite{UKrause}. Beyond integral domains, a prime polynomial identity ring is a Krull ring if and only if its monoid of regular elements is a Krull monoid \cite{Wau84}.

\begin{Prop}\label{Chouinard} Let $M$ be a multiplicative cancellative commutative monoid with quotient group $G(M)$. The following conditions are equivalent:
\begin{enumerate}
\item[{\rm (a)}] $M$ is a Krull monoid.
\item[{\rm (b)}]
There exists a set $\{\,v_i\mid i\in I\,\}$ of non-zero group morphisms $$v_i\colon G(M)\to {\Z}$$ such that: {\rm (1)} $M=\{\,x\in G(M)\mid v_i(x)\ge0$ for every $i\in I\,\}$;  and {\rm (2)} for every $x\in G(M)$, the set $\{\,i\in I\mid v_i(x)\ne0\,\}$ is finite.
\item[{\rm (c)}] There exist an abelian group $G$, a set $I$ and a subgroup $H$ of the free abelian group ${\Z}^{(I)}$ such that $M\cong G\oplus
(H\cap\N_0^{(I)})$.\end{enumerate}\end{Prop}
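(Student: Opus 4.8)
The plan is to establish the cycle of implications (a) $\Rightarrow$ (b) $\Rightarrow$ (c) $\Rightarrow$ (a), using throughout that a cancellative commutative monoid embeds in its quotient group and that a homomorphism of cancellative monoids extends uniquely to a homomorphism of the quotient groups. For (a) $\Rightarrow$ (b): if $f\colon M\to F$ is a divisor homomorphism into a factorial monoid $F\cong\N_0^{(J)}$, I extend it to $G(f)\colon G(M)\to G(F)\cong\Z^{(J)}$ and let $v_j\colon G(M)\to\Z$ be $G(f)$ followed by the $j$-th coordinate projection, discarding those $v_j$ that vanish identically (if none survive, $M=G(M)$ is a group and (b) holds with empty index set). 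Condition (2) is automatic, since for $x=ab^{-1}$ with $a,b\in M$ the element $G(f)(x)=f(a)-f(b)$ of $\Z^{(J)}$ has finite support. For (1), the inclusion $M\subseteq\{x\in G(M)\mid v_j(x)\ge 0\ \forall j\}$ is clear, and conversely $v_j(x)\ge 0$ for all $j$ says $f(b)\le f(a)$ in $F$, whence $b\le a$ in $M$ by the divisor homomorphism property, so $x=ab^{-1}\in M$.

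For (b) $\Rightarrow$ (c): I assemble the $v_i$ into a single homomorphism $v=(v_i)_{i\in I}\colon G(M)\to\Z^{(I)}$, which is well defined by condition (2), and put $G:=\ker v$ and $H:=\operatorname{im}v\le\Z^{(I)}$, giving a short exact sequence $0\to G\to G(M)\xrightarrow{v}H\to 0$. The key point is the classical fact that a subgroup of a free abelian group is again free abelian; thus $H$ is free, the sequence splits, and $G(M)\cong G\oplus H$ by an isomorphism under which $v$ becomes the projection onto $H$ composed with the inclusion $H\hookrightarrow\Z^{(I)}$. Since condition (1) says precisely that $M=v^{-1}(\N_0^{(I)})$, this isomorphism restricts to a monoid isomorphism $M\cong G\oplus(H\cap\N_0^{(I)})$, which is (c).

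For (c) $\Rightarrow$ (a): given $M\cong G\oplus(H\cap\N_0^{(I)})$ with $H\le\Z^{(I)}$, I take $f\colon M\to\N_0^{(I)}$ to be the projection $M\to H\cap\N_0^{(I)}$ followed by the inclusion into the factorial monoid $\N_0^{(I)}$. If $f(m)\le f(m')$, write $m\leftrightarrow(g,h)$ and $m'\leftrightarrow(g',h')$ with $h,h'\in H\cap\N_0^{(I)}$; then $h'-h\in\N_0^{(I)}$, and since also $h'-h\in H$ we get $h'-h\in H\cap\N_0^{(I)}$, so $m'-m\leftrightarrow(g'-g,h'-h)$ belongs to $M$ and $m\le m'$. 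Hence $f$ is a divisor homomorphism into a factorial monoid and $M$ is a Krull monoid. The only non-formal ingredient in the whole argument is the structure theorem that subgroups of free abelian groups are free (needed to split the exact sequence in (b) $\Rightarrow$ (c), and a genuine point to be careful about when $I$ is infinite); all the remaining steps are routine verifications with quotient groups and the definition of a divisor homomorphism.
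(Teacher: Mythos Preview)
The paper does not supply its own proof of this proposition: it is stated as a known characterization and attributed to Chouinard~\cite{Chouinard} (with \cite{GHK06,HK98} as general references), so there is nothing to compare your argument against line by line.

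Your proof is correct and follows the standard route. One small point worth tightening in (a)~$\Rightarrow$~(b): a factorial monoid $F$ need not be reduced, so in general $F\cong U(F)\times\N_0^{(J)}$ rather than $F\cong\N_0^{(J)}$. This is harmless, since composing the given divisor homomorphism $M\to F$ with the projection $F\to F/U(F)\cong\N_0^{(J)}$ is again a divisor homomorphism (divisibility in $F$ and in $F/U(F)$ coincide), and then your construction of the $v_j$ goes through verbatim. Everything else---the use of the divisor condition to recover $M$ from the $v_j$, the splitting via freeness of subgroups of free abelian groups, and the verification that the projection $G\oplus(H\cap\N_0^{(I)})\to\N_0^{(I)}$ is a divisor homomorphism---is clean.
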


A monoid $M$ is {\em root closed}, or {\em integrally closed},  if for every element $a$ in the quotient group $G(M)$ of $M$,  $a^n \in M$ for some integer $n\geq 1$ implies $a \in M$ \cite{Chouinard}. The monoid 
$M$ is {\em completely integrally closed} if  $a \in M$, $x \in G(M)$ and $ax^n \in M$ for every $n \geq 0$ implies $x \in M$. Of course, if $M$ is completely integrally closed, then $M$ is integrally closed, and the two conditions are equivalent if $M$ satisfies the a.c.c. on ideals, so, in particular, if $M$ is finitely generated \cite[p.~1460]{Chouinard}.

It is easily seen that every cancellative Krull monoid is root closed. The following example, due to Florian Kainrath, to whom we are grateful, shows that the cancellative monoid $\NX$ is not root closed, hence is not a Krull monoid. 

\begin{example}\label{Kainrath}
Consider the polynomial $f(x): = x^4 + 2x^3 - x^2 + 4x + 2$. It is immediate to see that $f(x) (x+1) = x^5 + 3x^4 + x^3 + 3x^2 + 6x + 2$ and $f(x)^2 = x^8 + 4 x^7 + 2x^6 + 4x^5 + 21 x^4 + 12 x^2 + 16x + 4$. This means that $f(x) \in G(\NX) \setminus \NX$ and $f(x)^2 \in \NX$, so $\NX$ is not a root closed monoid.
\end{example}

Recall that a commutative, cancellative monoid $M$ with torsion-free
quotient group $G(M)$ is a Krull monoid if and only if $M$ is completely integrally closed and satisfies the ascending chain condition on divisorial ideals in $M$. We have shown that our monoid $\NX$ has free quotient group, is not a Krull monoid and is not completely integrally closed because it is
not integrally closed. The monoid $\NX$ does not satisfies the a.~c.~c.~
on divisorial ideals, as the strictly ascending chain $\NX\subset
x^{-1}\NX\subset x^{-2}\NX\subset \dots$ shows.

Beyond Krull monoids, the best investigated objects in factorization theory are C-monoids (see \cite[Definition~2.9.5]{GHK06}). Our monoid $\NX$ is not a C-monoid because its set of distances is infinite (\cite[Theorem~2.3]{CCMS09}) and C-monoids do a have a finite set of distances \cite[Theorem~3.3.4]{GHK06} (note that the finiteness of the catenary degree implies the finiteness of the set of distances).

\begin{remark}
Let $R$ be the {\em root closure} of $\NX$, that is the monoid
$$
R:=\{q(x) \in G(\NX)\mid q(x)^n \in \NX \mbox{ for some integer } n\geq 1 \}.
$$
As Example \ref{Kainrath} shows, $R$ is a submonoid of $G(\NX)$ strictly containing $\NX$. Moreover, $R \subsetneq \Z[x]^*$. To prove it, let $q(x)$ be an element of $R$. We can write $q(x)=f(x)/g(x)$ for suitable polynomials $f(x),g(x) \in \NX$. Since $q(x) \in R$, it follows that there exists an integer $n \geq 1$ such that $g(x)^n \mid_\N f(x)^n$. In particular, $g(x)$ must divide $f(x)$ in $\Z[x]$, that is $q(x)$ is in $\Z[x]$. Finally, it is immediate to see that $x-1 \notin R$.

There is an isomorphism of topological semigroups $\Spec(R)\rightarrow \Spec(\NX)$ given by $P\mapsto P\cap \NX$ \cite[Corollary~2.4]{Pirashvili}.
\end{remark}

\begin{remark}
Let us show that there are no transfer homomorphisms from the monoid $\NX$ into a block monoid. We refer to  \cite[Definitions~2.5.5 and 3.2.1]{CCMS09} for the definition of block monoid and transfer homomorphism, respectively. It can be shown, using \cite[Lemma~6.4.4]{CCMS09} and \cite[Proposition~3.2.3]{CCMS09}, that if $M$ is a monoid admitting a transfer homomorphism into a block monoid, then the lengths of factorizations of elements of the form $ab$ with $a,b\in \Cal A(M)$ and $a$ fixed, are bounded by a constant depending only on $a$.
Now, consider the polynomial $x+1 \in \Cal A(\NX)$. We have seen in Section~\ref{Sec2} (see also \cite[Theorem~2.3]{CCMS09}) that $(x+n)^n(x^2-x+1)$ is an atom of $\NX$ for every $n \in \N$ and that $g_{1,n}(x)=[x+1]\cdot[(x+n)^n(x^2-x+1)]$ admits a factorization of length $n+1$, namely $g_{1,n}(x)=[x+n]^n\cdot[(x^2-x+1)(x+1)]$. It follows that the lengths of factorizations of elements of the form $(x+1)f(x)$, with $f(x)\in \Cal A(M)$, are not bounded and so there are no transfer homomorphisms from the monoid $\NX$ into a block monoid.
\end{remark}

We now follow \cite{Bou} and \cite[Section~2]{Meltem}. Let $K$ be a commutative ring with identity. A (not-necessarily associative) {\em $K$-algebra} is any unitary $K$-module $M$ with a $K$-bilinear mapping $(a,b)\mapsto ab$ of $M\times M$ into $M$ (equivalently, a $K$-linear mapping $M\otimes_KM\to M$). If $M$ is any (not-necessarily associative) $K$-algebra and $a\in M$, multiplication by $a$ is a mapping $\lambda_a\colon M\to M$, defined by $\lambda_a(b)=ab$ for every $b\in M$, which is an element of the associative $K$-algebra $\End_K(M)$ of all endomorphisms of the $K$-module $M$.

 As multiplications provide endomorphisms of the module, commutators give derivations. A {\em derivation} of a $K$-algebra $M$ is any $K$-linear mapping $D\colon M\to M$ such that $D(ab)=(D(a))b+a(D(b))$ for every $a,b\in M$. If $M$ is any $K$-algebra and $D,D'$ are two derivations of $M$, then the composite mapping $DD'$ is not a derivation of $M$ in general, but the commutator $DD'-D'D$ is. For any Lie $K$-algebra $M$ and any element $a\in M$, the mapping $\lambda_a\colon M\to M$, defined by $\lambda_a=[a,-]$, is an element of the Lie $K$-algebra $\Der_K(M)$ of derivations of the $K$-algebra $M$. 

\begin{Prop}\label{m} Let $d=\frac{d}{dx}\colon\Z[x]\to \Z[x]$ be the usual derivation of polynomials and $v_0\colon\Z[x]\to \Z$ the ring morphism $v_0\colon f\mapsto f(0)$. Then:  {\rm (1)} $\N_0[x]=\{\,a\in \Z[x]\mid v_0d^n(a)\ge0$ for every $n\ge0\,\}$;  and {\rm (2)} for every $a\in \Z[x]$, the set $\{\,n\mid n\ge0,\ v_0d^n(a)\ne0\,\}$ is finite.\end{Prop}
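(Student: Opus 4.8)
The plan is to reduce both assertions to a single explicit formula for $v_0d^n(a)$ in terms of the coefficients of $a$. Write $a=\sum_{k=0}^m c_kx^k$ with $c_k\in\Z$ and $m=\deg a$; the case $a=0$ is trivial, so assume $a\neq 0$. First I would record, by an immediate induction on $n$, that $d^n(x^k)=\frac{k!}{(k-n)!}\,x^{k-n}$ when $k\ge n$ and $d^n(x^k)=0$ when $k<n$. By $\Z$-linearity of $d^n$ this yields $d^n(a)=\sum_{k\ge n}c_k\frac{k!}{(k-n)!}\,x^{k-n}$.

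Next I would apply $v_0$, that is, evaluate at $x=0$: in the sum just written the only summand contributing a non-zero constant term is the one with $k-n=0$, so $v_0d^n(a)=n!\,c_n$, where we set $c_n:=0$ for $n>m$. In particular $v_0d^n(a)=0$ whenever $n>m$.

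Both statements are then immediate. For (2): if $v_0d^n(a)\neq 0$ then $n!\,c_n\neq 0$, hence $c_n\neq 0$, hence $n\le m$; thus $\{\,n\ge 0\mid v_0d^n(a)\neq 0\,\}\subseteq\{0,1,\dots,m\}$ is finite. For (1): since $n!$ is a positive integer, $v_0d^n(a)=n!\,c_n\ge 0$ if and only if $c_n\ge 0$; hence $v_0d^n(a)\ge 0$ for every $n\ge 0$ if and only if every coefficient $c_k$ of $a$ is non-negative, which is precisely the condition $a\in\N_0[x]$.

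There is essentially no obstacle here: the only points requiring any care are the elementary formula for the iterated derivative of a monomial and the trivial remark that dividing by the positive integer $n!$ preserves both sign and non-vanishing. Everything else is bookkeeping.
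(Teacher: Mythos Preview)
Your argument is correct and matches the paper's treatment: the paper does not write out a formal proof but remarks that $v_0d^n$ is the $n$-th derivative at $0$, which amounts exactly to your identity $v_0d^n(a)=n!\,c_n$, from which both assertions follow immediately.
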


Here $d \colon \Z[x]\to\Z[x]$ is the derivation on  the commutative ring $\Z[x]$ of polynomials in one indeterminate $x$, defined by $d (\sum_{i=0}^na_ix^i)=\sum_{i=1}^nia_ix^{i-1}$, and, for  the composition $d ^n$ of $d  \ n$ times, we have that $d ^n(ab)=\sum_{i=0}^n\binom{n}{i}d^{n-i}(a)d^i(b)$ by the Leibniz Rule.

\begin{Prop}\label{m'} Let $d\colon\Z[x]\to \Z[x]$ be the usual derivation of polynomials and $v_0\colon\Z[x]\to \Z$ the ring morphism $v_0\colon f\mapsto f(0)$. Consider the mapping $\Delta:=\prod_{n\ge0}v_0d^n\colon \Z[x]\to \Z^{(\N_0)}$ Then $\N_0[x]=\Delta^{-1}(\N_0^{(\N_0)})$.\end{Prop}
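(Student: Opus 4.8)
The plan is to reduce the statement to Proposition~\ref{m}, after first isolating the one elementary computation that makes everything transparent. For a polynomial $a=\sum_{i=0}^m a_ix^i\in\Z[x]$ one has $d^n(a)=\sum_{i\ge n}i(i-1)\cdots(i-n+1)\,a_i\,x^{i-n}$, so evaluating at $x=0$ annihilates every term except the one with $i=n$ and gives $v_0d^n(a)=n!\,a_n$ (with the convention $a_n=0$ for $n>m$). I would state and prove this identity first; it is a one-line induction on $n$, or a direct consequence of the Leibniz-type formula for $d^n$ recalled after Proposition~\ref{m}.

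Granting the identity $v_0d^n(a)=n!\,a_n$, the proposition follows from two observations. First, $\Delta$ genuinely takes values in the direct sum $\Z^{(\N_0)}$, not merely in the product $\Z^{\N_0}$: since $a$ is a polynomial, $a_n=0$ for all $n>\deg a$, hence $v_0d^n(a)=0$ for almost all $n$. This is exactly assertion~(2) of Proposition~\ref{m} and may simply be quoted. Second, $\Delta(a)\in\N_0^{(\N_0)}$ if and only if $v_0d^n(a)\ge 0$ for every $n\ge0$, which by the identity holds if and only if $a_n\ge0$ for every $n$, i.e.\ if and only if $a\in\N_0[x]$; this is assertion~(1) of Proposition~\ref{m}. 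Combining the two, $\Delta^{-1}(\N_0^{(\N_0)})=\N_0[x]$, as claimed.

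I do not expect any serious obstacle: Proposition~\ref{m'} is essentially a repackaging of Proposition~\ref{m} in terms of the single map $\Delta$. The only point worth a word of care is the distinction between $\Z^{(\N_0)}$ and $\Z^{\N_0}$ — the well-definedness of $\Delta$ as a map into the former \emph{is} the finiteness assertion~(2), so it should not be silently skipped. If one wants an argument independent of Proposition~\ref{m}, the displayed identity $v_0d^n(a)=n!\,a_n$ already does all the work on its own.
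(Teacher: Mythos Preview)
Your proposal is correct and matches the paper's own treatment: the paper gives no formal proof of Proposition~\ref{m'}, but the sentence immediately following it (``$v_0d^n$ is the $n$-th derivative at the point $x=0$'') is precisely the identity $v_0d^n(a)=n!\,a_n$ that you isolate, from which both the well-definedness of $\Delta$ into $\Z^{(\N_0)}$ and the equality $\Delta^{-1}(\N_0^{(\N_0)})=\N_0[x]$ are immediate. Your explicit mention of the $\Z^{(\N_0)}$ versus $\Z^{\N_0}$ distinction is a worthwhile clarification that the paper leaves implicit.
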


 Notice that $d^n$ is exactly the $n$-derivative ${d^n}/{dx^n}$, so that $v_0d^n$ is the $n$-the derivative at the point $x=0$. 
 
 \medskip
 
Now it is natural to introduce the formal differential operator ring over $\Z[x]$. The reason of this is due to the statements of Propositions~\ref{m} and~\ref{m'}, where we consider the derivation $d $ on the ring $\Z[x]$. Both $d $ and multiplications by elements of $\Z[x]$ can be viewed as group morphisms $\Z[x]\to \Z[x]$. Here, for every element $g$ of $\Z[x]$, multiplication by $g$ is such that $f\mapsto gf$. In other words, $d \in\End_\Z(\Z[x])$ and $\Z[x]\subseteq \End_\Z(\Z[x])$, where $\End_\Z(\Z[x])$ denotes the endomorphism ring of the free abelian group $\Z[x]$.
Thus, for the elements  $d $ and $g\in\Z[x]$, we have that $d  g=gd +d (g)$ in $\End_\Z(\Z[x])$, because,
for every $f\in \Z[x]$, $d (gf)=d (g)f+gd (f)$, so that the endomorphisms $d  g$ and $gd +d (g)$ of the abelian group $\Z[x]$ coincide. 

Thus the elements of the subring of $\End_\Z(\Z[x])$ generated by $\Z[x]$ and  $d $ can be written in the form $\sum_{i=0}^nf_i(x)d ^i$, where $d  f(x)=f(x)d +d (f(x))$. Equivalently, it is the subring of $\End_\Z(\Z[x])$ generated by $x$ and  $z:=d $, where $zx=xz+1$. Thus we set $A_1(\Z):=\Z\langle X,Z\rangle/(ZX-XZ=1)$, where $\Z\langle X,Z\rangle$ is the free ring on two objects $X$ and $Z$ and $(ZX-XZ=1)$ is the principal two-sided ideal of $\Z\langle X,Z\rangle$ generated by $ZX-XZ-1$.

We now follow \cite[Section~2]{FacNazeErice}. If  $d $ is a derivation on a ring $A$, let $B: = A [z; d ]$ denote the ring of differential operators. Thus the ring $B$ is the ring of all linear differential operators
$$
a_n(x) \frac{d^n}{dx^n}+a_{n-1}(x)
\frac{d^{n-1}}{dx^{n-1}}+\dots+a_{1}(x) \frac{d}{dx}+
a_0(x),
$$
with $a_n(x), a_{n-1}(x),\dots,a_0(x)\in\Z[x]$. Then $A$ is a left $B$-module defining  $z^n a = d ^n (a)$ for every $a \in A$. If $I $ is a subbimodule of $_BA_A$, then $d $ is a derivation on 
$A/I$. 

We will denote by  $A^{d }$ the subring of $A$ consisting of all elements $a\in A$ with $d (a)=0$. Taking as $d $ the usual derivation on the ring $A:=\Z[x]$, we have that $A$ is a left module over the ring $B: = \Z[x] [z; d ]$. Here $B$ is the $\Z$-algebra in two non-commuting indeterminates $x$ and $z$ subject to the relations $z^n x^m = d ^n (x^m)=0$ if $n>m$, and $z^n x^m = d ^n (x^m)=m(m-1)\dots(m-n+1)x^{m-n}=\frac{m!}{(m-n)!}x^{m-n}$ if $n\le m$.

It is easy to see that the endomorphism ring $\End(_B A)$ of $A$ as a left $B$-module is isomorphic to $A^d $. Indeed, an element in $\End(_B A)$ is an $A$-module morphism $\varphi:A\rightarrow A$ such that $\varphi(d (a))=d  \varphi (a)$ for every $a \in A$. Since $\varphi$ is given by multiplication on the right by an element $x \in A$, we must have $d (a)x=d (ax)=d (a) x +ad (x)$ for every $a \in A$, and this occurs exactly when $x \in A^d $ .

Recall that if $A$ is an algebra over a field $k$ of characteristic zero, $d$ is a locally nilpotent derivation of $A$ and $d (x) = 1$ for some $x\in A$, then any non-zero element $f\in A$ is of the form $f = \sum_{ i = 0}^n c_i x^i$, where $c_i \in A^{d}$ and $c_n$ is non-zero \cite[Lemma~3.9]{BorgesLomp}. This representation is unique, because if some linear combination $\sum_{ i = 0}^n c_i x^i$ is zero, then, by applying $n$ times $d $, we see that $n! c_n=0$. As $A$ is an algebra over a field of characteristic zero, $c_n=0$. Inductively, all the coefficients must be zero. In particular $A=A^d [x;d]$, where $d$ is the derivation $d(a)=xa-ax$ for every $a\in A^d $. Note that, for any $a\in A^d $ and $n\geq 0$, we have  $x^n a = \sum_{k=0}^n \binom{n}{k} d^{k}(a)x^{n-k}$.

 Now that we have that $\Z[x]$ is a left $ \Z[x] [z; d ]$-module, we have left multiplication by the elements $z^n\in \Z[x] [z; d ]$, which are group morphisms $\lambda_{z^n}\colon \Z[x]\to\Z[x]$. The mapping $v_0\colon\Z[x]\to\Z$ is also a group morphism, and $\N_0[x]$ consists exactly of the elements $f\in\Z[x]$ such that $v_0(z^nf)\ge0$, similarly to the case of Krull monoids.

\section{An algorithm to determine factorizations in $\NX$}

The multiplicative monoid $\NX$ can be embedded into the monoid $\N_0^{\N_0}$, direct product of countably many copies of the multiplicative monoid $\N_0$. It suffices to consider the monoid embedding $\varepsilon\colon\NX\to \N_0^{\N_0}$, defined by $\varepsilon\colon f(x)\mapsto (f(n))_{n\in\N_0}$. The mapping $\varepsilon$ is injective, because if $f(x),g(x)\in\NX$ and $f(n)=g(n)$ for every $n\in\N_0$, then $f(x)-g(z)$ is a polynomial with coefficients in $\Z$ with infinitely many distinct zeros, hence is the zero polynomial. In this section, we will consider an embedding of the monoid $\NX$ into $\N^2$, and we will use it to describe an algorithm to factorize polynomials in $\NX$.

Let $f$ be a polynomial in $\NX$ and let $\alpha(f)$ denote the greatest of the coefficients of $f$. Then $f(x)$ is completely determined by any pair of natural numbers $(a,b)$, where $a>\alpha(f)$ and $b=f(a)$. Indeed, the coefficients of $f$ correspond to the digits of the representation of $f(a)$ in the base-$a$ numeral system. To be more precise, given any pair $(a,b)\in \N_0^2$ such that $a>\alpha(f)$ and $b=f(a)$, we can divide $b$ by $a$, getting $b=aq_0+r_0$ for some natural numbers $q_0$ and $0\leq r_0< a$. Since $a>\alpha(f)$, $r_0$ coincides with the constant term of $f$. Now we can divide $q_0$ by $a$, obtaining $q_0=q_1a+r_1$. Iterating this process until $q_n$ is zero for some $n$, we get a series of reminders $r_0, r_1,\dots, r_n$. Of course, $r_nr_{n-1}\dots r_1r_0$ represents $b$ in the base-$a$ numeral system and it is easy to check that $f(x)=r_nx^n+r_{n-1}x^{n-1}+\dots+r_1x+r_0$.
Also notice that if $f(x)$ is determined by the pair $(a,b)$, then the degree of $f$ is equal to the number $$\eta(a,b):=\max\{\,n\in \N \mid a^n<b\,\}.$$

Using this fact, we have an injective map $\NX \hookrightarrow \N_0^2$ which sends any polynomial $f(x)\in\NX$ to the pair $(\alpha(f)+1,f(\alpha(f)+1))$. Moreover, we have a partition of $\N^2$ corresponding to the equivalence relation given by $(a_1,b_1)\sim(a_2,b_2)$ if there exists a polynomial $f(x)\in \NX$ such that $\alpha(f)<a_1,a_2$ and $f(a_i)=b_i$ for $i=1,2$. Thus any pair of the form $(a,b)$ with $a>b$ corresponds to the constant polynomial $f(x)=b$, while pairs on the diagonal, that is, pairs of the form $(a,a)$, all correspond to the polynomial $f(x)=x$. Finally, pairs of the form $(1,b)$ do not identify any element of $\NX$.

Notice that we can embed $\NX$ into $\N_0^2$ also sending any polynomial $f(x) \in \NX$ to the pair $(f(1)+1,f(f(1)+1))$, since $f(1)+1>\alpha(f)$. This map has the advantage of depending only on the evaluations of the polynomial and not (at least in a direct way) on the coefficients of $f$.

Given any (primitive) polynomial $f(x)$ in $\NX$, we can factor it with the following algorithm:
\begin{enumerate}
\item
Set $a:=\alpha(f)+1$ and $b:=f(a)$.
\item
Consider all possible factorizations of $b=b_1\dots b_k$ such that $b_j>1$ for all $j=1,\dots, k$ and $\deg(f)=\eta(a,b)=\sum_{j=1}^k \eta(a,b_j)$.
\item
For any obtained factorization $b=b_1\dots b_k$, consider the pairs $(a,b_1),\dots,$ $(a,b_k)$.
\item
The pairs $(a,b_j)$ of the previous step identify polynomials $f_{(a,b_j)}(x)$ in $\NX$ (write $b_j$ in base $a$).
\item
For any possible factorization of $b$ in $(2)$, we get a polynomial $g(x)=\prod_{i=1}^k f_{(a,b_j)}(x)$.
If $\alpha(g)\leq a$, then $g(x)=f(x)$.
\end{enumerate}

\begin{example}
Consider the polynomial $f(x)=x^5+x^4+x^3+x^2+x+1$ of the Introduction. Following the previous algorithm, we have:
\begin{enumerate}
\item
$a=2$ and $b=f(2)=63$.
\item
We can write $63=3\cdot21=9\cdot 7$. We must discard the factorization $63=3\cdot 3 \cdot 7$, because it does not satisfy the condition $\eta(a,b)=\sum_{j=1}^k \eta(a,b_j)$.
\item
We get the two pairs $(2,3)$ and $(2,21)$ from the factorization $63=3\cdot21$ and the two pairs $(2,9)$ and $(2,7)$ from the factorization $63=9\cdot 7$.
\item
We find the polynomials $f_{(2,3)}(x)=x+1$, $f_{(2,21)}(x)=x^4+x^2+1$, $f_{(2,9)}(x)=x^3+1$ and $f_{(2,7)}(x)=x^2+x+1$. 
\item
We get two factorizations:
$$
f(x)=(x+1)(x^4+x^2+1)=(x^3+1)(x^2+x+1).
$$
\end{enumerate}

\end{example}

\begin{example}
Consider the polynomial $f(x)=x^2+10x+3$. We have:
\begin{enumerate}
\item
$a=11$ and $b=f(11)=234$.
\item
We can write $234=13\cdot 18$. We must discard all the other factorizations, namely $234=2\cdot 9 \cdot 13=2\cdot 117=9\cdot 26$, because they do not satisfy the condition $\eta(a,b)=\sum_{j=1}^k \eta(a,b_j)$.
\item
We have the two pairs $(11,13)$ and $(11,18)$ from the factorization $234=13\cdot 18$.
\item
We get the polynomials $f_{(11,13)}(x)=x+2$ and $f_{(11,18)}(x)=x+7$. 
\item
But in this case $\alpha( f_{(11,13)} f_{(11,18)})=14>11$, so
$$
f(x)\neq f_{(11,13)}(x)f_{(11,18)}(x)=x^2+9x+14,
$$
which means that $f(x)$ is irreducible in $\NX$.
\end{enumerate}

\end{example}

\begin{remark}
We can consider the submonoid $N$ of $\NX$ consisting of all polynomials $f(x) \in \NX$ such that $x$ does not divide $f(x)$. Of course, the problem of studying factorizations does not change, but the assignment of the embedding $N\hookrightarrow \N_0^2$ becomes more elegant (and hopefully, more useful): indeed, we can send any polynomial $f(x)$ into the pair $(f(1),f(f(1)))$. In fact, if $f(x)$ is not a power $x^n$ of $x$, then $f(1)>\alpha(f)$ and all we have seen in this section also works taking $f(1)$ instead of $\alpha(f)+1$.
\end{remark}

\bibliographystyle{amsalpha}

\end{document}